\tikzset{->-/.style={decoration={
  markings,
  mark=at position .45 with {\arrow{>}}},postaction={decorate}}}
\newtheorem{definition}{Definition}
\newtheorem{theorem}[definition]{Theorem}
\newtheorem{example}[definition]{Example}
\newtheorem{proposition}[definition]{Proposition}
\newtheorem{corollary}[definition]{Corollary}
\newtheorem{lemma}[definition]{Lemma}
\newtheorem{remark}[definition]{Remark}
\newtheorem{notation}[definition]{Notation}
\newtheorem{conjecture}[definition]{Conjecture}
\def\C{\mathbb{C}}
\def\Q{\mathbb{Q}}
\def\Z{\mathbb{Z}}
\def\N{\mathbb{N}}
\def\R{\mathbb{R}}
\def\H{\mathsf{H}}
\def\E{\mathsf{E}}
\def\P{\mathbb{P}}
\def\oM{\overline{\mathcal{M}}}
\def\vT{\mathsf{vTev}^X_{\mathsf{\Gamma}}}
\def\tropT{\mathsf{tropTev}^X_{\mathsf{\Gamma}}}
\def\tropTH{\mathsf{tropTev}^{\mathcal{H}_a}_{\mathsf{\Gamma}}}
\title{Genus $0$ logarithmic and tropical fixed-domain counts for Hirzebruch surfaces}
\author{Alessio Cela and Aitor Iribar L\'opez}
\date{\vspace{-5ex}}
\begin{document}

\maketitle

\begin{abstract}

For a non-singular projective toric variety $X$, the virtual logarithmic Tevelev degrees are defined as the virtual degree of the morphism from the moduli stack of logarithmic stable maps $\oM_{\mathsf{\Gamma}}(X)$ to the product $\oM_{g,n} \times X^n$. In this paper, after proving the genus $0$ correspondence theorem in this setting, we use tropical methods to provide closed formulas for the case in which $X$ is a Hirzebruch surface. In order to do so, we explicitly list all the tropical curves contributing to the count.
\end{abstract}

\tableofcontents

\section{Introduction}


Let $X$ be a non-singular projective variety defined over $\C$ of dimension $r$. Fix integers $g \ge 0$ and $n \geq 1$ such that $2g-3+n>0$, ensuring that the moduli stack $\oM_{g,n}$ of stable curve is well-defined. Fix also an effective curve class $\beta \in H_2(X,\Z)$. Curve counts on $X$ are formulated in Gromov-Witten theory as intersection numbers on the moduli space of stable maps $\oM_{g,n}(X,\beta)$ against 
$$
[\oM_{g,n}(X,\beta)]^{\mathrm{vir}}  \in A_*(\oM_{g,n}(X,\beta))
$$
where $[\oM_{g,n}(X,\beta)]^{\mathrm{vir}}$ is the virtual fundamental class constructed in \cite{BF}. The Tevelev degrees of $X$ are such counts where in additions the domain curve is fixed (and general) and $n$ point insertions are imposed. More precisely, assume the dimensional constraint
$$
\mathrm{vdim}(\oM_{g,n}(X,\beta))= \mathrm{dim}(\oM_{g,n} \times X^n)
$$
holds and let
$$
\tau': \oM_{g,n}(X,\beta) \to \oM_{g,n} \times X^n
$$
be the natural map obtained from the stabilized domain curve and the evaluation morphisms.

\begin{definition}\cite[Definition 1.1]{BP}
    The \textbf{Tevelev degree} $\mathsf{vTev}^X_{g,n,\beta} \in \Q$  of $X$ is defined by the equality
    $$
    \tau'_*[\oM_{g,n}(X,\beta)]^{\mathrm{vir}} = \mathsf{vTev}^X_{g,n,\beta} [\oM_{g,n} \times X^n] \in A_*(\oM_{g,n} \times X^n).
    $$
\end{definition}

Fixed-domain curve counts for Grassmanians have a beautiful story at the intersection between algebraic geometry and physics. They are computed by the celebrated Vafa-Intriligator formula, conjectured by the physicists Vafa and Intriligator \cite{VI} and partially proved by Siebert-Tian \cite{ST} and by Bertram-Daskalopoulos-Wentworth in \cite{B2,B1}, and fully proven by Marian-Oprea in \cite{Mo} using Quot-schemes. The equivalence with the formulation in terms of stable maps was then proven by Marian-Oprea-Pandharipande in \cite{MOP}.
The systematic study of Tevelev degrees for general targets started with \cite{CPS}, motivated by work of Tevelev \cite{Tevelev} on scattering amplitudes in mathematical physics. The paper \cite{CPS} then stimulated a series of subsequent studies \cite{BLLRST,BP,Cela,CL2,CL,FL, Lian, lian_pr,LP}



In this paper, our aim is to extend the notion of Tevelev degrees to the situation where $X$ is a toric variety and any tangency condition with the boundary $\partial X$ is imposed. This is achieved using the moduli stack of logarithmic stable maps \cite{GS}. 

After Mikhalkin's breakthrough \cite{Mik}, a natural correspondence between algebraic curves and tropical curves is expected in certain nice situations. In the recent years, various versions of such correspondence have been proved \cite{NS,Shustin, Tyo, RanganathanI} and many tropical analogs of classical curve counting problems have been proved \cite{GMI,GMII}. Using \cite{Tyo, Goldner}, we show that the correspondence theorem holds in our context when the genus is $0$. Furthermore, we provide simple closed formulas for Hirzebruch surfaces using tropical methods. Our method can be applied to many other geometries. 

\subsection{Logarithmic curve counting with fixed domain}

Assume further that $X$ is a toric variety with fan $\Sigma$. Fix integers $g \ge 0$ and $n,m \geq 1$ and contact order $c$ along the toric buondary $\partial X$ of $X$ (see \cite[Definition 3.1]{GS}). We package the discrete data $(g,n,m,c)$ in the symbol $\mathsf{\Gamma}$, while still assuming the stability condition $2g-2+n>0$.

Let $\oM_\mathsf{\Gamma}(X)$ be the moduli space of genus $g$ and $n+m$ marked logarithmic stable maps $[f : (C,p_1,...,p_n,q_1,...,q_m) \to X]$ having contact order $c$ to the toric boundary divisor along the $m$ marked points $q_1,...,q_m$. This space and its virtual fundamental class $[\oM_{\mathsf{\Gamma}}(X)]^{\mathrm{vir}}$ were constructed in \cite{GS} and shown to be proper in \cite{ACMW}. 

In this paper we deal with logarithmic fixed-domain curve count problems with point insertions at the markings $p_1, \ldots , p_n$.

We set up the discrete data so that the problem has finitely many solutions. In order to make it precise we require some notation. 

\begin{notation}
    Order the components $D_1, \ldots, D_k$ of $\partial X$. Then, we can think of $c$ as the following data:
\begin{enumerate}
    \item[$\bullet$] a function $\varphi: \{1,...,m\} \to \{1, \ldots,k\}$ encoding to which divisor the marking $q_i$ is sent for $i=1, \ldots,m$;
    \item[$\bullet$] $k$ vectors  $\mu_i \in \N_{\geq 0}^{m_i}$ for $i=1,\ldots,k$ defined by 
    $$
    \mu_{i,j}= \text{ multiplicity prescibed by } c \text{ of the $j$-th marked point } q_k \text{ mapping to } D_i.
    $$
    We will denote by $|\mu_i|$ the length of $\mu_i$ for $i=1, \ldots, k$.
\end{enumerate} 
\end{notation}

Assume the dimensional constraint
$$
\mathrm{vdim}(\oM_\mathsf{\Gamma}(X))= \mathrm{dim}( \oM_{g,n} \times X^n)
$$
holds or equivalently that
\begin{equation}\label{dim constraint}
m=r(n+g-1)
\end{equation}
and let 
\begin{equation}\label{tau}      
    \tau: \oM_{\mathsf{\Gamma}}(X) \rightarrow \oM_{g,n} \times X^{n}
\end{equation}
be the canonical morphism obtained from the domain curve 
$
\pi:\oM_{g,n}(X,\beta) \rightarrow \oM_{g,n}$ and the evaluation maps $  \mathrm{ev}:\oM_{\mathsf{\Gamma}}(X) \rightarrow X^n.
$

\begin{definition}
    The \textbf{virtual logarithmic Tevelev degree} $\vT \in \Q$ of $X$ is defined by the equality
    $$
    \tau_*[\oM_{\mathsf{\Gamma}}(X)]^{\mathrm{vir}}= \Bigg( \prod_{i=1}^k \prod_{u \geq 1}|\{ v \ | \ \mu_{i,v}=u \}|!\Bigg) \hspace{0.1cm} \vT  [\oM_{g,n} \times X^n] \in A^0(\oM_{g,n} \times X^n).
    $$
\end{definition}
The factor $\prod_{i=1}^k \prod_{u \geq 0}|\{ v \ | \ \mu_{i,v}=u \}|!$ reflects the possible orderings of the markings $q_j$.

In Theorem \ref{thm: main} below, we compute all the genus $0$ virtual Tevelev degrees for Hirzebruch surfaces using tropical methods.

\subsection{Genus \texorpdfstring{$0$}{} Correspondence theorem}

Suppose that $g=0$ and let $M^{\mathrm{trop}}(\R^r,\mathsf{\Gamma})$ be the moduli space of labelled tropical rational $n$ marked tropical curves  $[h: \mathsf{C} \to \R^r]$ of degree $\Delta$ prescribed by $c$. By definition, $\Delta$ is an ordered list of $m$ vectors $v_i$ in $\R^r$ each parallel to one ray of the fan $\Sigma$ and such that if $c$ prescribes that $q_k$ is the $j$-th marking mapped to $D_i$ then the lattice length of $v_k$ is $\mu_{i,j}$. Our definitions of tropical curves, maps and their moduli spaces is that in \cite[Definition 3.2 and 4.1]{GKM}.

Note that, by Equation \eqref{dim constraint}, we have
$$
| \Delta|=m= r(n-1).
$$

Let
\begin{equation}\label{tautrop}
\mathrm{trop}(\tau): M^{\mathrm{trop}}(\R^r,\mathsf{\Gamma}) \to M_{0,n}^{\mathrm{trop}} \times (\R^r)^n 
\end{equation}
be the canonical morphism obtained from the domain curve and the evaluation map. The map $\mathrm{trop}(\tau)$ is a morphism of equidimensional tropical fans with $M_{0,n}^{\mathrm{trop}} \times (\R^r)^n $ \cite[Definition 2.8]{GKM}. Since $M_{0,n}^{\mathrm{trop}} \times (\R^r)^n$ is irreducible, by \cite[Corollary 2.26]{GKM}, we have a well-defined notion of degree.

\begin{definition}
    Define
    $$
    \tropT = \frac{\mathrm{degree}(\mathrm{trop}(\tau))}{\prod_{i=1}^k \prod_{u \geq 1}|\{ v \ | \ \mu_{i,v}=u \}|!}
    $$
    to be the \textbf{tropical Tevelev degree} of $X$ w.r.t. $\mathsf{\Gamma}$.
\end{definition}

After Mikhalkin's break-through \cite{Mik}, various correspondence theorems have been proved \cite{NS,Shustin, Tyo, RanganathanI}. In our context, we have the following:

\begin{theorem}\label{thm: correspondence}
     Virtual logarithmic tevelev degrees and their corresponding tropical degrees coincide in genus $0$, i.e.
    $$
    \vT=\tropT.
    $$
\end{theorem}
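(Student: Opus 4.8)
The plan is to compute the virtual logarithmic Tevelev degree as an honest enumerative count over a general point and then match that count with the tropical degree via the correspondence theorems of \cite{Tyo} and \cite{Goldner}. Since $\tau_*[\oM_{\mathsf{\Gamma}}(X)]^{\mathrm{vir}}$ is by definition a multiple of $[\oM_{0,n}\times X^n]$ and both spaces have the same dimension, the coefficient is detected over a general point $\xi = ([C,p_1,\ldots,p_n], x_1,\ldots,x_n)\in \oM_{0,n}\times X^n$. First I would argue that for genus $0$ and a general such $\xi$, the fiber $\tau^{-1}(\xi)$ is finite and reduced, the relevant logarithmic stable maps are unobstructed, and $\tau$ is unramified, so that all virtual multiplicities equal $1$. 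This reduces the left-hand side to the actual number of genus $0$ logarithmic stable maps with stabilized domain $[C,p_i]$, sending $p_i\mapsto x_i$, and having the prescribed contact orders $c$ along $\partial X$, divided by the ordering factor $\prod_{i}\prod_{u\ge1}|\{v:\mu_{i,v}=u\}|!$.

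Next I would choose $\xi$ so that the point conditions and the fixed domain tropicalize to generic tropical data: the images $\mathrm{trop}(x_i)$ become generic points of $\R^r$, and the fixed domain $[C,p_i]$ translates, via the tropicalization of $\oM_{0,n}$, into a generic collection of tropical cross-ratio conditions, i.e.\ a generic point of $M_{0,n}^{\mathrm{trop}}$. In this language the enumerative problem on the left is exactly a count of rational curves in $X$ subject to point, tangency, and cross-ratio constraints. I would then invoke Tyomkin's correspondence \cite{Tyo} for the point and contact-order conditions, together with Goldner's tropical cross-ratio correspondence \cite{Goldner} to absorb the fixed-domain (cross-ratio) conditions, concluding that each generic tropical curve in the fiber of $\mathrm{trop}(\tau)$ over the corresponding tropical point lifts to exactly its tropical multiplicity many algebraic solutions. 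Summing over the fiber identifies the algebraic count with $\mathrm{degree}(\mathrm{trop}(\tau))$.

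Finally, since the same normalization factor $\prod_{i=1}^k\prod_{u\ge1}|\{v:\mu_{i,v}=u\}|!$ appears in both definitions and cancels, the identity $\vT=\tropT$ follows.

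The hard part will be the compatibility of the tropicalization with the fixed-domain condition. Standard correspondence theorems fix point and tangency data while letting the domain curve move; here the point of $\oM_{0,n}$ is also fixed, and one must show that fixing a general algebraic domain corresponds precisely to fixing a general point of $M_{0,n}^{\mathrm{trop}}$ and that the morphism $\tau$ tropicalizes to $\mathrm{trop}(\tau)$, so that the two degrees genuinely agree. This requires combining the tropicalization of the moduli of curves (in the sense of Abramovich--Caporaso--Payne) with Goldner's cross-ratio framework and checking transversality carefully, so that no solutions are lost, created, or over-counted when passing between the algebraic and tropical fibers.
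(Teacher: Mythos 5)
Your proposal follows essentially the same route as the paper: genus-$0$ unobstructedness for toric targets (the log tangent bundle is trivial, so the virtual class is the fundamental class and the count is enumerative), followed by Tyomkin's correspondence combined with Goldner's cross-ratio framework to encode the fixed domain. The ``hard part'' you flag is resolved in the paper exactly as you anticipate: the fixed point of $\oM_{0,n}$ is converted into cross-ratio conditions via the degree-$1$ maps to $\prod_{i=4}^n \oM_{0,\{1,2,3,i\}}$ and their tropical analogues, and the potential loss or creation of solutions is ruled out by an irreducibility/dimension argument excluding the boundary and the locus of maps through toric points.
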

The proof is given in \S\ref{Correspondence} below. 

\subsection{Genus \texorpdfstring{$0$}{} counts for Hirzebruch surfaces}

In the following we specialize to the case when $X= \mathcal{H}_a$ is the Hirzebruch surface $\P( \mathcal{O} \oplus \mathcal{O}(a))$ with $a \geq 1$ and provide closed formulas for the tropical (and so the virtual logarithmic) Tevelev degrees of $\mathcal{H}_a$ with any tangency conditions $c$. 

\begin{notation}\label{notation: rays}
    The fan $\Sigma$ of $\mathcal{H}_a$ has four rays, with associated unit vectors
    $$
    n_1= (-1,a), \
    n_2= (0,1),\
    n_3= (1,0),\text{ and }
    n_4= (0,-1).
    $$
    Denote by $D_1,D_2,D_3$ and $D_4$ the corresponding toric divisors.
\end{notation}


\begin{theorem}\label{thm: main}
    We have 
    \begin{enumerate}
        \item[$\bullet$] if either $|\mu_1|>n-1$ or $|\mu_3|>n-1$, then 
        $$
        \tropTH=0,
        $$
        \item[$\bullet$] otherwise 
        $$
        \tropTH = 
        \Bigg( \prod_{i=1}^4  \frac{|\mu_i|! \prod_{j=1}^{|\mu_i|} \mu_{i,j} }{\prod_{u \geq 1} |\{v \ | \ \mu_{i,v}=u \}|!}\Bigg) a^{n-1-|\mu_2|-|\mu_4|} \binom{n-1-|\mu_4|}{|\mu_2|}
        $$
    \end{enumerate}
\end{theorem}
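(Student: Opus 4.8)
The plan is to turn Theorem~\ref{thm: correspondence} into a hands-on tropical enumeration. By that theorem $\vT=\tropTH$, and by definition $\tropTH$ equals $\mathrm{degree}(\mathrm{trop}(\tau))$ divided by the symmetry factor $\prod_{i=1}^4\prod_{u\geq1}|\{v:\mu_{i,v}=u\}|!$. Since $\mathrm{trop}(\tau)$ is a morphism of equidimensional tropical fans onto the irreducible fan $M_{0,n}^{\mathrm{trop}}\times(\R^2)^n$, its degree is computed, via \cite[Corollary 2.26]{GKM}, as the number of preimages of one generic target point, each weighted by the lattice index of the linear map $d\,\mathrm{trop}(\tau)$ on the cone containing it. I would first reduce this local index to a product of vertex determinants and record the arithmetic input that drives everything: among the rays of $\mathcal{H}_a$ one has $|\det(n_i,n_j)|=1$ for every pair except $|\det(n_1,n_3)|=a$ (while $|\det(n_2,n_4)|=0$). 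Dually, the Newton polygon of $\mathcal{H}_a$ is the trapezoid whose edges, normal to $n_1,\dots,n_4$, have lattice lengths equal to the total weights $w_i=\sum_j\mu_{i,j}$, satisfying $w_3=w_1$ and $w_4=w_2+aw_1$, and the only non-unimodular vertex type forced on us is one incident to both an $n_1$- and an $n_3$-edge, of multiplicity $a$. This is the sole source of the power of $a$ in the answer.

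Next I would fix $\theta\in M_{0,n}^{\mathrm{trop}}$ to be a generic trivalent metric tree on the $n$ contracted legs and place the $x_i$ in stretched general position. A preimage $h\colon\mathsf{C}\to\R^2$ is then encoded by the stabilised domain $\theta$ together with an attachment of the $m=2(n-1)$ labelled boundary legs along the edges of $\theta$; once the attachments and leg directions are chosen, balancing forces all remaining edge directions, and the fixed metric of $\theta$ together with the point conditions rigidifies $h$. The structural heart of the proof is to classify the realisable attachments. I expect to show that every contributing curve carries a horizontal floor structure: segments in the direction $n_3=(1,0)$ along which the $n$ contracted points are strung, with the down-legs $n_4$ and up-legs $n_2$ attaching with vertex determinant $1$ and each diagonal leg $n_1$ meeting a horizontal segment with vertex determinant $a$. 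The $n$ fixed points cut the backbone into $n-1$ slots, which is what bounds the number of admissible horizontal and diagonal legs.

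Granting the classification, the formula assembles cleanly. Every contributing curve has the same multiplicity: the product $\prod_{i,j}\mu_{i,j}$ of all leg weights, each weight entering through the determinant at the vertex where its leg attaches, times $a$ raised to the number of $n_1$-$n_3$ vertices, which the bookkeeping pins down to $n-1-|\mu_2|-|\mu_4|$. The number of contributing curves is then combinatorial: permuting the labelled legs within each of the four directions contributes $\prod_{i=1}^4|\mu_i|!$, and the only remaining freedom is the interleaving of the $|\mu_2|$ up-legs among the $n-1-|\mu_4|$ slots left after the down-legs are placed, counted by $\binom{n-1-|\mu_4|}{|\mu_2|}$. Multiplying the common multiplicity by the number of curves and dividing by the symmetry factor reproduces the stated expression. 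The vanishing case falls out of the same analysis: if $|\mu_1|>n-1$ or $|\mu_3|>n-1$ there are more diagonal or rightward legs than the $n-1$ slots can accommodate, no rigid curve exists, and the degree is $0$; the complementary constraint $|\mu_2|\leq n-1-|\mu_4|$ is recorded by the binomial, which also guarantees the exponent of $a$ is nonnegative whenever the term is nonzero.

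The main obstacle is precisely the classification in the second step — establishing that the realisable preimages are exactly the horizontal-floor curves, with no contributions from other backbone shapes, from higher-valence vertices, or from legs attaching to other legs, and that the numbers of $n_1$-$n_3$ vertices and of admissible interleavings are exactly $n-1-|\mu_2|-|\mu_4|$ and $\binom{n-1-|\mu_4|}{|\mu_2|}$. This is the explicit list of contributing tropical curves promised in the introduction, and the delicate point is to control the interaction between the fixed metric domain in $M_{0,n}^{\mathrm{trop}}$ and the freedom in distributing the $m$ legs: it is exactly this rigidity that both cuts the combinatorial types down to the floor curves and forces every one of them to share the same multiplicity.
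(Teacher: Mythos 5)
Your overall framework (correspondence theorem, then computing $\mathrm{degree}(\mathrm{trop}(\tau))$ as a weighted count of preimages of a chosen point, with the power of $a$ traced to $|\det(n_1,n_3)|=a$) is sound and agrees with the paper. But the step you yourself flag as ``the structural heart of the proof'' --- the classification of contributing curves --- is not carried out, and the structure you assert for it is doubtful. You fix a \emph{generic trivalent} metric tree in $M_{0,n}^{\mathrm{trop}}$ and then import the floor-diagram picture (horizontal $n_3$-floors carrying the marked points, joined by $n_2$/$n_4$ elevators). Floor decompositions are the standard tool when only point conditions are imposed and the points are stretched; here the additional fixed-domain conditions $\mathrm{ft}_i^*(\lambda_i)$ dominate the combinatorics, and nothing in your argument shows that the contributing curves acquire a floor structure, that the backbone is a path cut into $n-1$ slots, or that all contributing curves share the multiplicity $a^{n-1-|\mu_2|-|\mu_4|}\prod_{i,j}\mu_{i,j}$. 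Without that classification the counts $\binom{n-1-|\mu_4|}{|\mu_2|}$ and the exponent of $a$ are unsupported, and the vanishing statement for $|\mu_1|>n-1$ or $|\mu_3|>n-1$ rests on the same unproved picture.

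The paper avoids this by making the opposite choice of domain point: it pulls back the \emph{origin} of each $M_{0,\{1,2,3,i\}}^{\mathrm{trop}}$ (all cross-ratios zero), which is legitimate by rational equivalence. This degenerate choice forces every contributing curve to have a single $n$-valent central vertex with all other vertices trivalent and all polyhedral weights equal to $1$ (Lemma \ref{lemma: central vertex}); a leaf count ($2L_0+L_1=2$) then pins the curves down to exactly two shapes (types A and B of Proposition \ref{prop: type A and B}), after which the multiplicity is a clean product of vertex determinants and the enumeration reduces to solving a small linear system in the leaf types, a Vandermonde identity to merge the type-A and type-B contributions in the case $|\mu_3|+|\mu_4|<n-1$, and a case-by-case exclusion of all other positions of the central vertex. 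If you want to salvage your route, you would need to either prove the floor structure and constancy of multiplicity for a generic domain tree directly --- which looks substantially harder than the paper's argument --- or switch to the degenerate domain point, where the classification becomes a two-line Euler-characteristic count.
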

The proof of this theorem is given in \S\ref{main formula}. 

\begin{remark}\label{rmk: absence of curves}
    The formula above gives zero whenever $ |\mu_2| > n-1-|\mu_4|$. In particular, suppose that
    $\mu_{i,j}=1$ for all $i,j$ and that $a\geq 2$. Then 
    $$
    |\mu_1|=|\mu_3| \text{ and } |\mu_4|=|\mu_2|+(a+1)|\mu_1|
    $$
    so 
    $$
    |\mu_4| +|\mu_2|> \frac{|\Delta|}{2}=n-1
    $$
    and $\mathsf{tropTev}^{\mathcal{H}_a}_{\mathsf{\Gamma}} =0$.
\end{remark} 

A geometric interpretation of this fact is given in \S\ref{sec: absence of curves}.

Suppose $a=1$ and $\mu_2=\emptyset$. Formally, $\Sigma$ reduces to the fan of $\P^2$ and we are counting curves in $\P^2$. Then (the proof of) Theorem \ref{thm: main} also shows the following. 

\begin{theorem}
    We have 
    $$
    \mathsf{tropTev}^{\P^2}_{\mathsf{\Gamma}}=  \prod_{i=1,3,4}  \frac{|\mu_i|! \prod_{j=1}^{|\mu_i|} \mu_{i,j} }{\prod_{u \geq 1} |\{v \ | \ \mu_{i,v}=u \}|!}.
    $$
\end{theorem}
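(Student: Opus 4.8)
The plan is to obtain this as a degeneration of Theorem \ref{thm: main}, so that both the enumeration and the resulting formula specialize. The first step is to justify the identification of the fan: setting $a=1$ and $\mu_2=\emptyset$ leaves the three active rays $n_1=(-1,1)$, $n_3=(1,0)$, $n_4=(0,-1)$, which are primitive and satisfy $n_1+n_3+n_4=0$. Applying the lattice automorphism $(x,y)\mapsto(x,-y)$ of $\Z^2$ carries them to the standard rays $(-1,-1),(1,0),(0,1)$ of $\P^2$. Since the degree of $\mathrm{trop}(\tau)$ in \eqref{tautrop} and the normalizing factor depend only on the lattice geometry of the fan together with the tangency orders $\mu_i$ --- all of which are preserved by an element of $GL_2(\Z)$ --- I would conclude that $\mathsf{tropTev}^{\P^2}_{\mathsf{\Gamma}}$ equals the tropical Tevelev degree of $\mathcal{H}_1$ computed with no ends along $D_2$.

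Given this identification, the second step is to read off the value from the formula of Theorem \ref{thm: main} under $a=1$ and $|\mu_2|=0$. The $i=2$ factor of the product $\prod_{i=1}^4$ reduces to $\tfrac{0!}{1}=1$, collapsing it to $\prod_{i=1,3,4}$; the prefactor $a^{\,n-1-|\mu_2|-|\mu_4|}$ becomes $1$; and the binomial $\binom{n-1-|\mu_4|}{|\mu_2|}=\binom{n-1-|\mu_4|}{0}=1$. What remains is exactly the claimed product $\prod_{i=1,3,4}\frac{|\mu_i|!\,\prod_{j}\mu_{i,j}}{\prod_{u\ge 1}|\{v\mid \mu_{i,v}=u\}|!}$.

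The reason this is a consequence of the \emph{proof} of Theorem \ref{thm: main}, and not only of its statement, is that the proof lists the contributing tropical curves and computes each multiplicity explicitly. When $\mu_2=\emptyset$ there are no $D_2$-ends to distribute among the bounded edges and, with $a=1$, the slope data that produced the factor $a^{\,n-1-|\mu_2|-|\mu_4|}\binom{n-1-|\mu_4|}{|\mu_2|}$ degenerates, so that the enumeration collapses to a single family whose total weight is the bare product above. I expect the main point requiring care to be precisely this collapse: I would verify that dropping the ray $n_2$ and re-identifying the fan with that of $\P^2$ neither creates nor destroys contributing tropical curves, and that the multiplicity of each surviving curve --- given by the relevant lattice index in the degree of $\mathrm{trop}(\tau)$ --- is genuinely invariant under the $GL_2(\Z)$ change of coordinates. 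Finally, I would note that the argument applies in the non-vanishing branch of Theorem \ref{thm: main}, i.e. when $|\mu_1|,|\mu_3|\le n-1$, which is the regime in which the stated $\P^2$ formula is nonzero.
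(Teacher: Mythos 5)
Your proposal is correct and follows essentially the same route as the paper, which simply observes that for $a=1$ and $\mu_2=\emptyset$ the fan of $\mathcal{H}_1$ formally reduces to that of $\P^2$ and that the proof of Theorem \ref{thm: main} (the explicit list of contributing curves and their multiplicities) then specializes to the stated product. Your added details — the explicit $GL_2(\Z)$ identification of the rays, the observation that the degree of $\mathrm{trop}(\tau)$ depends only on the lattice data of $\Delta$, and the check that the $i=2$ factor, the power of $a$, and the binomial all collapse to $1$ — are exactly the verifications the paper leaves implicit.
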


\subsubsection{Description of the curves enumerated in \texorpdfstring{$\mathsf{tropTev}^{\mathcal{H}_a}_{\mathsf{\Gamma}}$}{}}

When $r=2$, we can describe all the curves contributing to $\tropT$. Fix general points $x_1, \ldots, x_n$ in $\R^2$ and fix the stabilized domain curve $\bar{\mathsf{C}}$ in $M_{0,n}^{\mathrm{trop}}$ to have have all lengths equal to $0$. Such a curve is not in the interior of a maximal cone of $M_{0,n}^{\mathrm{trop}}$, but we are allowed to assume so by the intersection theoretic point view presented in \S\ref{sec: intersection theory and tropical defns}.

\begin{proposition}\label{prop: type A and B}
    The curves $[h: \mathsf{C} \to \R^2]$ contributing to $\tropT$ with point insertions $x_1, \ldots ,x_n$ and stabilized domain curve $\bar{\mathsf{C}}$ are all embeddings and the domain curve $\mathsf{C}$ has one of the two shapes in Figure \ref{fig: type A and B}:
    \begin{enumerate}
        \item[A)] in type A there is a central vertex $V$ with the marking $p_1$ attached to it and $n-1$ leaves from $V$ 
        consisting of two bounded edges, one marking and two ends,
        \item[B)] in type B, there still is a central vertex $V$ and $n$ leaves from it of which exactly two consist of one bounded edge, one marking and one unbounded edge and the other $n-2$ leaves are as in type A. 
    \end{enumerate}
    \begin{figure}[h]
    \begin{center}
    \begin{tikzpicture}[xscale=0.22,yscale=0.22]

			

    \draw[dashed, ultra thick, red] (0,0) to (-1.5,1.5);
    \node[red] at (-1.6,1.9) {$p_{1}$};
    \node at (1,0) {$V$};

    \draw [thick] (0,0) to (0,5);
    \draw [thick] (0,5) to (-2,7);
    \draw [thick] (0,5) to (2,7);
    \draw[dashed, thick, red] (1,6) to (0,7);
    \node[red] at (-0.5,7.5) {$p_{i_1}$};

    \draw [thick] (0,0) to (3,3);
    \draw [thick] (3,3) to (3,5);
    \draw [thick] (3,3) to (5,3);
    \draw[dashed, thick, red] (4,3) to (4,4);
    \node[red] at (4,4.5) {$p_{i_2}$};

    \draw [thick] (0,0) to (3,-3);
    \draw [thick] (3,-3) to (5,-3);
    \draw [thick] (3,-3) to (3,-5);
    \draw[dashed, thick, red] (4,-3) to (4,-4);
    \node[red] at (4,-4.5) {$p_{i_3}$};

    \node[thick] at (0.1,-3.9) {\tiny$\bullet$};
    \node at (-1,-3.8) {\tiny$\bullet$};
    \node at (-2,-3.5) {\tiny$\bullet$};

    \node at (0,-7) {Type (A)};

    \node at (19,0) {$V$};
    \draw [thick] (20,0) to (20,5);
    \draw [thick] (20,5) to (18,7);
    \draw [thick] (20,5) to (22,7);
    \draw[dashed, thick, red] (21,6) to (20,7);
    \node[red] at (19.5,7.5) {$p_{i_1}$};

    \draw [thick] (20,0) to (23,3);
    \draw [thick] (23,3) to (23,5);
    \draw [thick] (23,3) to (25,3);
    \draw[dashed, thick, red] (24,3) to (24,4);
    \node[red] at (24,4.5) {$p_{i_2}$};

    \draw[thick] (20,0) to (26,1);
    \draw[dashed, thick, red] (25,0.8) to (24.5,2.3);
    \node[red] at (25.4,2.3) {$p_{i_3}$};

    \draw[thick] (20,0) to (26,-2);
    \draw[dashed, thick, red] (25,-1.5) to (25.5,0);
    \node[red] at (25.8,0) {$p_{i_4}$};

    \draw [thick] (20,0) to (23,-3);
    \draw [thick] (23,-3) to (25,-3);
    \draw [thick] (23,-3) to (23,-5);
    \draw[dashed, thick, red] (24,-3) to (24,-4);
    \node[red] at (24,-4.5) {$p_{i_5}$};

    \node at (20.1,-3.8) {\tiny$\bullet$};
    \node at (19,-3.8) {\tiny$\bullet$};
    \node at (18,-3.5) {\tiny$\bullet$};

    \node at (20,-7) {Type (B)};

    \end{tikzpicture}
    \caption{Shape of the domain curve}\label{fig: type A and B}
    \end{center}
    \end{figure}
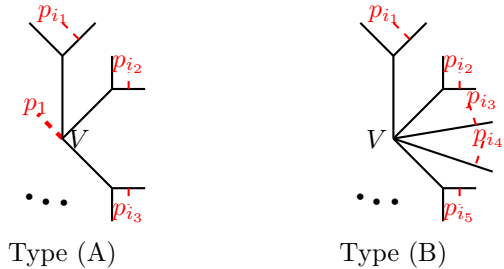    
\end{proposition}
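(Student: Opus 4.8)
The plan is to reduce the statement to a purely combinatorial classification of the genus $0$ tropical stable maps $[h:\mathsf{C}\to\R^2]$ lying in the fiber of $\mathrm{trop}(\tau)$ over the chosen point $(\bar{\mathsf{C}},x_1,\dots,x_n)$, and then to read off their shape. Since $\mathrm{trop}(\tau)$ is a morphism of equidimensional tropical fans, its degree may be computed at the special point $(\bar{\mathsf{C}},x_1,\dots,x_n)$ by the intersection-theoretic description of \S\ref{sec: intersection theory and tropical defns}. I first record what the two conditions mean. The point conditions say that the contracted leg $p_i$ is attached to a vertex $w_i$ of $\mathsf{C}$ with $h(w_i)=x_i$. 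The condition that the stabilized domain is the cone point says that the metric tree spanned by $p_1,\dots,p_n$ has all its bounded edges of length $0$; concretely, one checks that there is a distinguished vertex $V$ of $\mathsf{C}$ such that every connected component of $\mathsf{C}\setminus\{V\}$ contains at most one of the marked legs $p_i$. I will call the closure of such a component together with its edge to $V$ a leaf, and I will call $V$ the central vertex.

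Next I analyse a single leaf $L$ carrying the marking $p_i$, using balancing and the genericity of $x_i$. By the balancing condition the direction of any bounded edge of $\mathsf{C}$ is the (weighted) sum of the directions of the ends lying beyond it (contracted legs have direction $0$), so $h$ restricted to $L$ is determined up to the lengths of its bounded edges, which are the only free parameters. The image $h(w_i)$ of the marked vertex is $h(V)$ plus a positive combination of the edge directions along the path from $V$ to $w_i$; hence, in order for $h(w_i)$ to equal a general point $x_i\in\R^2$, this path must contain at least two bounded edges whose directions are linearly independent. I then separate two cases according to whether the central vertex carries a marking. If some $p_j$ is attached to $V$ (after relabelling, $p_1$), then $h(V)=x_1$ is fixed and each of the remaining $n-1$ leaves must, by the previous sentence, use at least two bounded edges; this is Type A. If $V$ carries no marking, then $h(V)$ is free and contributes two additional parameters shared by all leaves, so that at most two leaves may reach their point through a single edge; this is Type B.

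The classification is then completed by a dimension count together with the global end count $|\Delta|=2(n-1)$. A trivalent vertex other than $V$ with an attached end absorbs exactly one end, and the rigidity of the fiber (the point conditions and the domain condition cut out a $0$-dimensional locus) forces each leaf to have the minimal possible number of bounded edges and each non-central vertex to be trivalent. Together with $h(w_i)=x_i$ this shows that a leaf reaching its point through two edges must be a caterpillar with vertices $V,C_i,A_i$ carrying $p_i$ and one end at $A_i$ and one end at $C_i$ (two bounded edges, one marking, two ends), while a leaf reaching its point through one edge carries $p_i$ and a single end (one bounded edge, one marking, one end). Finally, summing ends over all leaves: in Type A the $n-1$ leaves account for $2(n-1)$ ends and in Type B the two one-edge leaves and the $n-2$ two-edge leaves account for $2+2(n-2)=2(n-1)$ ends, using up all of $\Delta$. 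This rules out any further component of $\mathsf{C}\setminus\{V\}$ without a marking, as well as leaves with more ends or more than one marking, and yields exactly the shapes of Figure \ref{fig: type A and B}.

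It remains to check that each such $h$ is an embedding. Given the explicit shape, the only possible failures of injectivity are two edges or legs of $\mathsf{C}$ with parallel and overlapping images, and I would rule these out using the specific ray directions $n_1,\dots,n_4$ of $\mathcal{H}_a$ (in particular that the two edges of each caterpillar have independent directions, as forced above) together with the genericity of $x_1,\dots,x_n$, which separates the images of distinct leaves. The main obstacle is the interface between the tropical intersection theory and the combinatorics: making rigorous, within the framework of \S\ref{sec: intersection theory and tropical defns}, that computing the degree at the non-generic point $(\bar{\mathsf{C}},x_1,\dots,x_n)$ selects precisely the curves with a single central vertex as above, and that the rigidity argument genuinely excludes markingless leaves and higher-valence vertices away from $V$; everything else follows directly from balancing and the two constraints.
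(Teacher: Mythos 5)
Your proposal follows essentially the same route as the paper: the paper first establishes the central-vertex structure (one $n$-valent vertex, all other vertices trivalent, all weights $1$) in Lemma \ref{lemma: central vertex} by pulling back the $n-3$ cross-ratio conditions and invoking \cite[Lemma 3.11]{Goldner}, then classifies the leaves exactly as you do --- excluding extra or misplaced vertices because they would create a string (an embedding $\R\to\mathsf{C}$ disjoint from the markings) along which the curve deforms without changing its $\mathrm{ev}$-multiplicity --- and closes with the same count $L_0+L_1+L_2=n$, $L_1+2L_2=2(n-1)$, hence $2L_0+L_1=2$. The interface issue you flag as the main obstacle is exactly what the paper outsources to \cite[Lemma 3.11, Proposition 3.16, Lemma 3.19]{Goldner} together with the string argument, so your argument is correct in substance.
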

The proof of this proposition is also given in \S\ref{main formula}.
For $X= \mathcal{H}_a$, we will choose points in the following way:
\begin{enumerate}
    \item[$\bullet$] if $|\mu_3|+|\mu_4| \geq n-1$, the point $x_1$ is in the origin $(0,0)$, there are $n-1-|\mu_4|$ points in $\{ x>0, y>0 \}$, $|\mu_3|+|\mu_4|-(n-1)$ in $\{ ax+y>0, y<0\}$ and $n-1-|\mu_3|$ in $\{ x<0, y<0 \}$. Note that if $|\mu_3|>n-1$ or $|\mu_4|>n-1$ then Theorem \ref{thm: main} prescribes $\tropTH=0$.
    \item[$\bullet$] if instead $|\mu_3|+|\mu_4| < n-1$, then again $x_1$ is in the origin $(0,0)$, there are $n-1-|\mu_3|-|\mu_4|$ points in $\{ x<0, ax+y>0 \}$, $|\mu_3|$ in $\{ x>0, y>0 \}$ and $|\mu_4|$ in $\{ x<0, y<0 \}$.
\end{enumerate}
We will then prove that there are
$$
\prod_{i=1}^4 |\mu_i|!\binom{n-1-|\mu_4|}{|\mu_2|}
$$
tropical curves as in Proposition \ref{prop: type A and B} and moreover that each of such curves contributes with multiplicity
$$
a^{n-1-|\mu_2|-|\mu_4|}\prod_{j=1}^{|\mu_i|} \mu_{i,j}.
$$ 
to $\tropTH$.

\begin{example}\label{example 1}
    Suppose $a=2$ and $\mu_1=(1,2)$, $\mu_2=(3)$, $\mu_3=(1,1,1)$ and $\mu_4=(4,4)$. So in this case $|\mu_3|+|\mu_4| \geq n-1$. We list in Figure \ref{fig: example 1} the  $4$ contributing curves, all of which are of type A.
    \begin{figure}[h]
    \begin{center}
    \begin{tikzpicture}[xscale=0.15,yscale=0.15]

			
	   \draw [dashed , black] (-5,5) to        (15,5);

        \draw [dashed, black] (5,-5) to (5,15);

        \draw [dashed, black] (5,5) to (0,15);



        \draw [ultra thick] (5,5) to (5,8);
        \draw [ultra thick] (5,8) to (15,8);
        \draw [ultra thick] (5,8) to (1.5,15);
        \node at (7,16) {$3$};
        \node at (16,8) {$1$};

        \draw [ultra thick] (5,5) to (7,6);
        \draw [ultra thick] (7,6) to (7,15);
        \draw [ultra thick] (7,6) to (15,6);
        \node at (16,6) {$1$};
        \node at (1.6,16) {$1$};

        \draw [ultra thick] (5,5) to (7,-3);
        \draw [ultra thick] (7,-3) to (7,-5);
        \draw [ultra thick] (7,-3) to (15,-3);
        \node at (16,-3) {$1$};
        \node at (7,-6) {$4$};


        \draw [ultra thick] (5,5) to (3,5);
        \draw [ultra thick] (3,5) to (3,-5);
        \draw [ultra thick] (3,5) to (-3,15);
        \node at (3,-6) {$4$};
        \node at (-4,16) {$2$};

        \node[red] at (5,5) {$\bullet$};
        \node[red] at (6.5,4.2) {$x_6$};

        \node[red] at (7,11) {$\bullet$};
        \node[red] at (8.5,11) {$x_1$};

        \node[red] at (13,8) {$\bullet$};
        \node[red] at (14,9) {$x_2$};

        \node[red] at (13,-3) {$\bullet$};
        \node[red] at (14,-2) {$x_3$};

        \node[red] at (3,-4) {$\bullet$};
        \node[red] at (4,-3) {$x_4$};


	   \draw [dashed , black] (20,5) to        (40,5);

        \draw [dashed, black] (30,-5) to (30,15);

        \draw [dashed, black] (30,5) to (25,15);



        \draw [ultra thick] (30,5) to (30,11);
        \draw [ultra thick] (30,11) to (40,11);
        \draw [ultra thick] (30,11) to (28,15);
        \node at (36,16) {$3$};
        \node at (41,8) {$1$};

        \draw [ultra thick] (30,5) to (36,8);
        \draw [ultra thick] (36,8) to (36,15);
        \draw [ultra thick] (36,8) to (40,8);
        \node at (41,11) {$1$};
        \node at (28,16) {$1$};


        \draw [ultra thick] (30,5) to (32,-3);
        \draw [ultra thick] (32,-3) to (32,-5);
        \draw [ultra thick] (32,-3) to (40,-3);
        \node at (41,-3) {$1$};
        \node at (32,-6) {$4$};


        \draw [ultra thick] (30,5) to (28,5);
        \draw [ultra thick] (28,5) to (28,-5);
        \draw [ultra thick] (28,5) to (22,15);
        \node at (28,-6) {$4$};
        \node at (21,16) {$2$};


        \node[red] at (30,5) {$\bullet$};
        \node[red] at (31.5,4.2) {$x_6$};

        \node[red] at (32,11) {$\bullet$};
        \node[red] at (33,11.8) {$x_1$};

        \node[red] at (38,8) {$\bullet$};
        \node[red] at (39,9) {$x_2$};

        \node[red] at (38,-3) {$\bullet$};
        \node[red] at (39,-2) {$x_3$};

        \node[red] at (28,-4) {$\bullet$};
        \node[red] at (29,-3) {$x_4$};

			
	   \draw [dashed , black] (-5,-20) to  (15,-20);

        \draw [dashed, black] (5,-30) to (5,-10);

        \draw [dashed, black] (5,-20) to (0,-10);



        \draw [ultra thick] (5,-20) to (4.24,-17);
        \draw [ultra thick] (4.25,-17) to (15,-17);
        \draw [ultra thick] (4.25,-17) to (0.75,-10);
        \node at (7,-9) {$3$};
        \node at (16,-17) {$1$};

        \draw [ultra thick] (5,-20) to (7,-19);
        \draw [ultra thick] (7,-19) to (7,-10);
        \draw [ultra thick] (7,-19) to (15,-19);
        \node at (16,-19) {$1$};
        \node at (1,-9) {$2$};

        \draw [ultra thick] (5,-20) to (7,-28);
        \draw [ultra thick] (7,-28) to (7,-30);
        \draw [ultra thick] (7,-28) to (15,-28);
        \node at (16,-28) {$1$};
        \node at (7,-31) {$4$};


        \draw [ultra thick] (5,-20) to (3,-24);
        \draw [ultra thick] (3,-24) to (3,-30);
        \draw [ultra thick] (3,-24) to (-4,-10);
        \node at (3,-31) {$4$};
        \node at (-4,-9) {$1$};

        \node[red] at (5,-20) {$\bullet$};
        \node[red] at (6.5,-20.8) {$x_6$};

        \node[red] at (7,-14) {$\bullet$};
        \node[red] at (8.5,-14) {$x_1$};

        \node[red] at (13,-17) {$\bullet$};
        \node[red] at (14,-16) {$x_2$};

        \node[red] at (13,-28) {$\bullet$};
        \node[red] at (14,-27) {$x_3$};

        \node[red] at (3,-29) {$\bullet$};
        \node[red] at (4,-28) {$x_4$};

			
	   \draw [dashed , black] (20,-20) to  (40,-20);

        \draw [dashed, black] (30,-30) to (30,-10);

        \draw [dashed, black] (30,-20) to (25,-10);



        \draw [ultra thick] (30,-20) to (28.5,-14);
        \draw [ultra thick] (28.5,-14) to (40,-14);
        \draw [ultra thick] (28.5,-14) to (26.8,-10);
        \node at (36,-9) {$3$};
        \node at (41,-17) {$1$};

        \draw [ultra thick] (30,-20) to (36,-17);
        \draw [ultra thick] (36,-17) to (36,-10);
        \draw [ultra thick] (36,-17) to (40,-17);
        \node at (41,-14) {$1$};
        \node at (27,-9) {$2$};
        

         \draw [ultra thick] (30,-20) to (32,-28);
        \draw [ultra thick] (32,-28) to (32,-30);
        \draw [ultra thick] (32,-28) to (40,-28);
        \node at (41,-28) {$1$};
        \node at (32,-31) {$4$};


        \draw [ultra thick] (30,-20) to (28,-24);
        \draw [ultra thick] (28,-24) to (28,-30);
        \draw [ultra thick] (28,-24) to (21,-10);
        \node at (28,-31) {$4$};
        \node at (21,-9) {$1$};

        \node[red] at (30,-20) {$\bullet$};
        \node[red] at (31.5,-20.8) {$x_6$};

        \node[red] at (32,-14) {$\bullet$};
        \node[red] at (33.5,-13) {$x_1$};

        \node[red] at (38,-17) {$\bullet$};
        \node[red] at (39,-16) {$x_2$};

        \node[red] at (38,-28) {$\bullet$};
        \node[red] at (39,-27) {$x_3$};

        \node[red] at (28,-29) {$\bullet$};
        \node[red] at (29,-28) {$x_4$};
        
    \end{tikzpicture}
    \caption{The $4$ tropical curves contributing to $\mathsf{tropTev}^{\mathcal{H}_a}_{\mathsf{\Gamma}}$ in Example \ref{example 1}}\label{fig: example 1}
    \end{center}
    \end{figure}
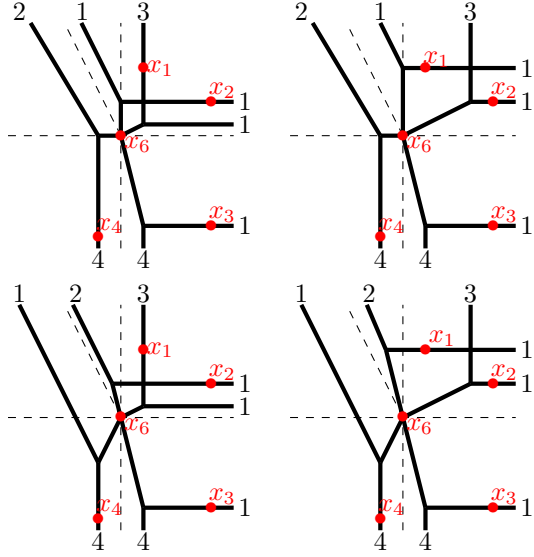
\end{example}

\begin{example}\label{example 2}
    Suppose $a=1$ and $\mu_1=(1,1,1)$, $\mu_2=(1)$, $\mu_3=(3)$ and $\mu_4=(4)$. In this case $|\mu_3|+|\mu_4| \geq n-1$. We list in Figure \ref{fig: example 2} below the $2$ contributing curves: one of type A and one of time B.
    \begin{figure}[h]
    \begin{center}
    \begin{tikzpicture}[xscale=0.15,yscale=0.15]

	\draw [dashed , black] (-5,0) to (15,0);
    \draw [dashed, black] (5,10) to (5,-10);
    \draw [dashed, black] (5,0) to (-5,10);
    
    \draw[ultra thick] (5,0) to (3,4);
    \draw[ultra thick] (3,4) to (-1,8);
    \node at (-2,9) {$1$};
    \draw[ultra thick] (3,4) to (3,10);
    \node[red] at (3,10) {$\bullet$};
    \node[red] at (4.5,10) {$x_2$};
    \node at (3,12) {$1$};


    \draw[ultra thick] (5,0) to (9,2);
    \draw[ultra thick] (9,2) to (1,10);
    \node at (0,11) {$1$};
    \draw[ultra thick] (9,2) to (15,2);
    \node[red ] at (14,2) {$\bullet$};
    \node[red] at (14,3.5) {$x_3$};
    \node at (16,2.6) {$3$};


    \draw[ultra thick] (5,0) to (4,-4);
    \draw[ultra thick] (4,-4) to (4, -10);
    \draw[ultra thick] (4,-4) to (-5, 5);
    \node at (-6,6) {$1$};
    \node[red] at (4,-9) {$\bullet$};
    \node[red] at (5.8,-9) {$x_4$};
    \node at (4,-11) {$4$};

    \node[red] at (5,0) {$\bullet$};
    \node[red] at (7.2,0) {$x_1$};

	\draw [dashed , black] (20,0) to (40,0);
    \draw [dashed, black] (30,10) to (30,-10);
    \draw [dashed, black] (30,0) to (20,10);


    \draw[ultra thick] (28,10) to (45,-7);
    \draw[ultra thick] (45,-7) to (39,-9);
    \draw[ultra thick] (45,-7) to (50,-7);
    \node at (51,-7) {$3$};
    \node[red] at (28,10) {$\bullet$};
    \node[red] at (29.5,10) {$x_2$};
    \node at (27.5,11.5) {$1$};


    \draw[ultra thick] (20,10) to (39,-9);
    \node at (19,11) {$1$};
    \draw[ultra thick] (39,-9) to (39,10);
    \node at (39,11) {$1$};

    
    \node[red ] at (39,2) {$\bullet$};
    \node[red] at (40.5,3.5) {$x_3$};


    \draw[ultra thick] (39,-9) to (37.4,-17);
    \draw[ultra thick] (37.4,-17) to (25,-4.2);
    \node at (24,-3.2) {$1$};
    \draw[ultra thick] (37.4,-17) to (37.4,-18);
    \node at (37.4,-19) {$4$};

    \node[red] at (29,-9) {$\bullet$};
    \node[red] at (31,-9) {$x_4$};
    \node[red] at (30,0) {$\bullet$};
    \node[red] at (31.5,1) {$x_1$};
    
    \end{tikzpicture}
    \caption{The $2$ tropical curves contributing to $\tropTH$ in Example \ref{example 2}}
        \label{fig: example 2}
    \end{center}
    \end{figure}
\end{example}

\subsection{Comparison of virtual fundamental classes for maps to Hirzebruch surfaces}

We can use fixed-domain curve counts to distinguish the virtual fundamental class of the moduli spaces of logarithmic and stable maps to Hirzebruch surfaces.

More precisely, let $X=\mathcal{H}_a$ be a Hirzebruch surface and $\beta \in H_2(X,\Z)$ be an effective curve class. Let $c$ be defined by 
$$
\mu_i=\underbrace{( 1, \ldots ,1)}_{\beta \cdot D_i \text{ times} }
$$
for $i=1,2,3,4$ and let $n \in \N$ be such that the dimensional constraint \eqref{dim constraint} holds. Consider the natural (proper) morphism
$$
\alpha: \overline{\mathcal{M}}_\mathsf{\Gamma} (\mathcal{H}_a) \to \oM_{0,n}(\mathcal{H}_a,\beta)
$$ 
of virtually equidimensional Deligne-Mumford stacks. 

\begin{theorem}\label{thm: application}
    In the following cases:
    \begin{enumerate}
        \item $a=2j$ where $j \in \Z_{\geq 1}$ and $\beta=d[(j+1)D_1+D_2]$ for $d>0$ such that $2d=n-1$; or 
        \item $a=2j+1$ where $j \in \Z_{\geq 1}$ and $\beta=[j(d-k)+d] D_1 +(d-k) D_2$ for $d$ and $k$ integers such that $0 \leq k \leq d$, $0 \leq k \leq n-1-d$ and $3d-k=2(n-1)$;
    \end{enumerate} 
    we have 
    $$
    \alpha_*[\oM_\mathsf{\Gamma}(\mathcal{H}_a)]^{\mathrm{vir}} \neq [\oM_{0,n}(\mathcal{H}_a,\beta)]^{\mathrm{vir}}.
    $$
\end{theorem}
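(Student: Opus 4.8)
The plan is to deduce the inequality of virtual classes from the vanishing of the logarithmic fixed-domain count against the non-vanishing of the ordinary (non-logarithmic) Tevelev degree, which I access through quantum cohomology. I would first reduce everything to a statement about the ordinary degree. The Tevelev morphism $\tau$ of \eqref{tau} factors as $\tau=\tau'\circ\alpha$, because the stabilized domain and the evaluations at $p_1,\dots,p_n$ depend only on the underlying stable map obtained by forgetting the log structure and the markings $q_j$. Assume, for contradiction, that $\alpha_*[\oM_\mathsf{\Gamma}(\mathcal{H}_a)]^{\mathrm{vir}}=[\oM_{0,n}(\mathcal{H}_a,\beta)]^{\mathrm{vir}}$. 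Applying $\tau'_*$ and using $\tau'_*\alpha_*=\tau_*$ gives
$$\tau_*[\oM_\mathsf{\Gamma}(\mathcal{H}_a)]^{\mathrm{vir}}=\tau'_*[\oM_{0,n}(\mathcal{H}_a,\beta)]^{\mathrm{vir}}=\mathsf{vTev}^{\mathcal{H}_a}_{0,n,\beta}\,[\oM_{0,n}\times\mathcal{H}_a^n].$$
By definition the left-hand side is a positive multiple of $\mathsf{vTev}^{\mathcal{H}_a}_{\mathsf{\Gamma}}\,[\oM_{0,n}\times\mathcal{H}_a^n]$, and $\mathsf{vTev}^{\mathcal{H}_a}_{\mathsf{\Gamma}}=\tropTH$ by Theorem \ref{thm: correspondence}. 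In both cases $\mu_{i,j}=1$ for all $i,j$ and $a\ge 2$, so Remark \ref{rmk: absence of curves} forces $\tropTH=0$ and the left-hand side vanishes. Hence the assumed equality would imply $\mathsf{vTev}^{\mathcal{H}_a}_{0,n,\beta}=0$, so the theorem is equivalent to the non-vanishing $\mathsf{vTev}^{\mathcal{H}_a}_{0,n,\beta}\ne 0$ for the classes in (1) and (2).

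To compute the ordinary virtual Tevelev degree I would use the quantum-cohomology description of fixed-domain counts. Since $QH^*(\mathcal{H}_a)$ is generically semisimple, writing $\epsilon_i$ for its idempotents, $[\mathrm{pt}]=\sum_i p_i\,\epsilon_i$ and $\theta_i=\int_{\mathcal{H}_a}\epsilon_i$, the genus-$0$ formula reads
$$\sum_\beta \mathsf{vTev}^{\mathcal{H}_a}_{0,n,\beta}\,q^\beta=\sum_i p_i^{\,n}\,\theta_i.$$
Concretely I would take the toric presentation of $QH^*(\mathcal{H}_a)$, solve the quantum Stanley--Reisner relations to describe the four points of its spectrum as explicit algebraic functions of the Novikov variables $q_F,q_B$ (being careful, since $\mathcal{H}_a$ is non-Fano for $a\ge 2$ and the naive relations must be corrected), and read off $p_i$ and $\theta_i$ on each branch. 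As a sanity check I would confirm that the corresponding computation reproduces the simpler $\P^1$ and $\P^2$ cases.

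The main obstacle is then the non-vanishing of $[q^\beta]\sum_i p_i^{\,n}\theta_i$ for the prescribed $\beta$. The spectral data involve square roots of the Novikov variables, so extracting $[q^\beta]$ produces a finite signed sum over the four branches, which could a priori cancel. The arithmetic conditions $2d=n-1$ in (1) and $3d-k=2(n-1)$ in (2), together with the dimensional constraint \eqref{dim constraint}, are exactly what place $\beta$ where this sum is controlled, and the split into $a$ even and $a$ odd reflects the parity of the relevant half-integer exponents, which decides whether the branch contributions reinforce or cancel. I would aim to isolate, for each prescribed $\beta$, a single non-cancelling contribution, or to reorganize the sum so that the surviving monomials share a sign, thereby forcing $\mathsf{vTev}^{\mathcal{H}_a}_{0,n,\beta}\ne 0$ and completing the argument. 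Throughout I would verify the hypotheses making the formula applicable, namely semisimplicity of $QH^*(\mathcal{H}_a)$ and the non-negativity of the $\mu_i$ imposed by the chosen classes.
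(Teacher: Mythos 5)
Your reduction is exactly the one the paper uses: factor $\tau=\tau'\circ\alpha$, push forward the assumed equality of virtual classes, invoke $\mathsf{vTev}^{\mathcal{H}_a}_{\mathsf{\Gamma}}=\tropTH=0$ (Theorem \ref{thm: correspondence} together with Remark \ref{rmk: absence of curves}, which applies since $a\geq 2$ and all $\mu_{i,j}=1$ in both cases), and conclude that the theorem is equivalent to the non-vanishing $\mathsf{vTev}^{\mathcal{H}_a}_{0,n,\beta}\neq 0$. That part is correct and matches the paper.

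The gap is in the second half, which is where all the content lies. You propose to prove $\mathsf{vTev}^{\mathcal{H}_a}_{0,n,\beta}\neq 0$ by extracting the coefficient of $q^\beta$ from the semisimple quantum-cohomology formula of \cite{BP}, but you never carry this out: you concede that the branch contributions ``could a priori cancel'' and only announce the intention to isolate a non-cancelling term. For a non-Fano Hirzebruch surface this is precisely the hard point --- the corrected quantum Stanley--Reisner relations, the four points of the spectrum, and the quantities $p_i$, $\theta_i$ are not written down, and no argument is given for why the specific classes $\beta$ in cases (1) and (2) yield a nonzero coefficient. As it stands the decisive step is an unproved assertion. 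The paper sidesteps this computation entirely: it constructs an explicit smooth family $\mathcal{U}\to\mathbb{A}^1$ with central fiber $\mathcal{H}_a$ and general fiber $\mathcal{H}_{a-2j}$, i.e.\ $\P^1\times\P^1$ or $\mathrm{Bl}_p\P^2$ (Lemma \ref{lemma: smooth family}), identifies the curve class across the family by a Chern-class computation (Lemma \ref{lemma: deformation class I}), and then imports the known values $\mathsf{vTev}^{\P^1\times\P^1}_{0,n,(d,d)}=1$ and $\mathsf{vTev}^{\mathrm{Bl}_p\P^2}_{0,n,d\H-k\E}=\binom{n-1-d}{k}$ from \cite{BP} and \cite{CL} via deformation invariance of Gromov--Witten invariants. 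Since the quantum rings of $\mathcal{H}_a$ and of the general fiber are identified by the same deformation invariance, your route would in effect reproduce those computations anyway; until the coefficient extraction is actually performed and a surviving term exhibited, the proof is incomplete.
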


This is achieved in \S\ref{proof of applications} by comparing the corresponding Tevelev degrees and using the results of \cite{CL}.

As already observed in \cite{SPIELBERG}, Hirzebruch surfaces provide an excellent example for the fact that in general Gromov–Witten invariants might
well count curves in the boundary components of the moduli spaces. The proof of theorem \ref{thm: main} and Theorem \ref{thm: application} show that logarithmic stable maps behave better from this point of view.

\subsection{Further directions}

Our approach for computing $\tropT$ for Hirzebruch surfaces should generalize to other geometries and higher dimensional varieties.

Higher dimensional generalizations of $\mathcal{H}_a$ includes $\P^1$-bundles $\P(\mathcal{O}_{\P^{r}} \oplus \mathcal{O}_{\P^{r}}(a))$ over $\P^{r}$, for which we conjecture the following formula to hold.

\begin{conjecture}
    Let $X= \P (\mathcal{O}_{\P^r}\oplus \mathcal{O}_{\P^r}(a))$ and let $D_1, \ldots , D_{r+1}$ be the fibers over the invariant hyperplanes $\{x_1=0\}, \ldots , \{x_{r+1}=0\}$, and let $D_{r+2}, D_{r+3}$ be the zero section and the infinity section, respectively. Then, when $\vT$ is not $0$,
    $$
    \vT= \tropT = \left(\prod_{i=1}^{r+3}\frac{|\mu_i|!\prod _{j=0}^{|\mu_{ij}|}\mu_{i,j}}{\prod_{u \geq 1} |\{v \ | \ \mu_{i,v}=u \}|!}\right) a^{(n-1)- |\mu_{r+2}|- |\mu_{r+3}|} {n-1-|\mu_{r+2}|\choose |\mu_{r+3}|}.
    $$
\end{conjecture}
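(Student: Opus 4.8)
The plan is to reuse the two-step architecture behind Theorem \ref{thm: main}: reduce to a purely tropical count, and then enumerate the contributing tropical curves. Since Theorem \ref{thm: correspondence} holds for every non-singular projective toric variety in genus $0$, and $X = \P(\mathcal{O}_{\P^r} \oplus \mathcal{O}_{\P^r}(a))$ is such a variety, the equality $\vT = \tropT$ is not itself conjectural --- the entire content to be proved is the closed formula for $\tropT$. First I would fix the fan of $X$ in $\Z^{r+1}$: the two sections $D_{r+2}, D_{r+3}$ have rays $\pm e_{r+1}$, while the $r+1$ fiber divisors $D_1, \ldots, D_{r+1}$ project to the rays of the fan of $\P^r$, with the twist $a$ entering the ray over the hyperplane at infinity. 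Here $|\Delta| = (r+1)(n-1)$, and balancing imposes linear relations on the $|\mu_i|$ of the type seen in Remark \ref{rmk: absence of curves}; these also control when the count vanishes, and in the non-vanishing regime force $(n-1) - |\mu_{r+2}| - |\mu_{r+3}| \ge 0$.

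Next I would choose $n$ general points $x_1, \ldots, x_n \in \R^{r+1}$ together with the maximally degenerate stabilized domain curve $\bar{\mathsf{C}}$, imitating the placement used for $\mathcal{H}_a$ but organized along the projection to the base $\R^r$ and the fiber coordinate. The aim is to force every contributing curve into an explicit tree generalizing types A and B of Proposition \ref{prop: type A and B}: a central vertex $V$ carrying $p_1$ together with $n-1$ or $n$ leaves, each leaf a local piece containing a single point insertion and terminating on the toric boundary. I would then classify these shapes and show that their number equals $\prod_{i=1}^{r+3} |\mu_i|!\,\binom{n-1-|\mu_{r+2}|}{|\mu_{r+3}|}$, with the factorials recording the orderings of the markings $q_j$ on each divisor and the binomial recording the choice of which leaves run toward the zero section and which toward the infinity section.

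For the multiplicities I would compute the lattice index of $\mathrm{trop}(\tau)$ at each such curve as a product of local vertex contributions. Paralleling the surface case, I expect this to equal $a^{(n-1) - |\mu_{r+2}| - |\mu_{r+3}|} \prod_{i=1}^{r+3} \prod_{j=1}^{|\mu_i|} \mu_{i,j}$, where each $\mu_{i,j}$ is the lattice length of the corresponding end and each power of $a$ is contributed by a leaf whose end feels the twist in the fan. Multiplying the enumeration by this multiplicity and dividing by the automorphism factor $\prod_{i} \prod_{u \ge 1} |\{ v : \mu_{i,v} = u \}|!$ from the definition of $\tropT$ would then reproduce the conjectured formula.

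The hard part will be the classification of the contributing curves, i.e.\ the correct analogue of Proposition \ref{prop: type A and B}. For $r = 1$ the base is $\P^1$ and the leaves are essentially one-dimensional, so only types A and B arise; for general $r$ each leaf is a tropical curve in $\R^r$ (lifted to $\R^{r+1}$) through a general point with prescribed unbounded directions, and excluding all unwanted combinatorial types is far more delicate. I anticipate that the cleanest route is to exploit the $\P^1$-bundle structure directly: project to $\R^r$ and use the fixed-domain theory of $\P^r$ to pin down the base behavior of each leaf, then treat the one-dimensional fiber direction separately to extract the power of $a$ and the binomial. Checking that genericity rules out higher-valence vertices and unexpected leaf shapes --- and that every surviving curve is an embedding carrying exactly the stated multiplicity --- is where the main technical effort will lie.
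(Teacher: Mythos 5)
The statement you are addressing is not proved in the paper: it appears in the ``Further directions'' section as an open conjecture, so there is no proof of record to compare your argument against. Your observation that the equality $\vT=\tropT$ already follows from Theorem \ref{thm: correspondence} (which is proved for arbitrary non-singular projective toric targets in genus $0$) is correct, and your overall plan --- reduce to the tropical count, force a central-vertex structure via the cross-ratio/forgetful conditions, classify the leaves, and multiply a curve count by a common vertex multiplicity --- is the natural generalization of the paper's proof of Theorem \ref{thm: main}. But what you have written is a strategy, not a proof, and the steps you defer are exactly the ones carrying all the content.

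Concretely: (i) you never write down the fan of $\P(\mathcal{O}_{\P^r}\oplus\mathcal{O}_{\P^r}(a))$ or the analogue of the system \eqref{eqn: main system}, so the vanishing locus and the exponent $(n-1)-|\mu_{r+2}|-|\mu_{r+3}|$ are asserted rather than derived (note also that balancing constrains the weighted degrees $\sum_j\mu_{i,j}$, not the lengths $|\mu_i|$ as you state); (ii) the classification of leaf shapes is genuinely harder in $\R^{r+1}$: the counting argument of Corollary \ref{description curves} yields $2L_0+L_1=2$ only because $|\Delta|=2(n-1)$ in the surface case, whereas here $|\Delta|=(r+1)(n-1)$, a leaf through a general point can carry up to $r+1$ ends in many combinatorial types, and the string argument ruling out extra vertices must be redone; (iii) the claim that every contributing curve has the same multiplicity $a^{(n-1)-|\mu_{r+2}|-|\mu_{r+3}|}\prod_{i,j}\mu_{i,j}$ is unverified --- already for Hirzebruch surfaces this required a case analysis over types A and B and over which $\epsilon_i$ vanish, and in the surface case the count of type-B curves had to be combined with the type-A count via a Vandermonde identity to produce the single binomial coefficient, a cancellation you have no analogue of here; and (iv) the exclusion of all other contributions (the analogue of \S\ref{sec:exclusion of further contributions}) is not even sketched. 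Until these points are carried out, the statement remains exactly what the paper calls it: a conjecture.
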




The Hirzebruch surface $\mathcal{H}_1$ is isomorphic to the blow-up of $\P^2$ at one point. In \cite{CL2}, the authors computed the geometric degrees with simple incidence conditions with the toric boundary for blowups of $\P^r$ at up to $r+1$ points. We also conjecture the following generalization of that formula ho hold.

\begin{conjecture}
    Let $X $ be the blowup of $\P^r$ at $r$ of the torus fixed points, and let $D_1, \ldots , D_r$ be the exceptional divisors of $[0:1:\ldots :0], \ldots , [0: \ldots :1]$, and $D_{r+1}, \ldots, D_{2r+1}$ the strict transforms of the linear subspaces $\{x_1 =0\}, \ldots , \{x_{r+1}=0\}$ of $\P^r$. Then if $\vT$ is nonzero,
    $$
    \vT= \tropT = \left(\prod_{i=0}^{2r+1}\frac{|\mu_i|!\prod _{j=0}^{|\mu_{ij}|}\mu_{i,j}}{\prod_{u \geq 1} |\{v \ | \ \mu_{i,v}=u \}|!}\right)\prod_{i=1}^r{n-1-|\mu_{i+r+1}|\choose |\mu_i|}.
    $$
\end{conjecture}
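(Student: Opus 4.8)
The plan is to reduce the computation to a purely tropical one and then carry out the enumeration following the strategy used for Hirzebruch surfaces. Since the genus is $0$, Theorem \ref{thm: correspondence} gives $\vT=\tropT$, so it suffices to compute the tropical Tevelev degree of $X$. I would begin by writing the fan $\Sigma$ of $X$ explicitly. Starting from the fan of $\P^r$, with rays $u_1=e_1,\ldots,u_r=e_r$ and $u_{r+1}=-(e_1+\cdots+e_r)$, blowing up the torus-fixed point $P_k$ (the one lying on all $D_j$ with $j\neq k$) is the star subdivision of the cone $\mathrm{cone}(u_j:j\neq k)$, which adds the ray $\sum_{j\neq k}u_j=-u_k$. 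Blowing up the $r$ points $P_2,\ldots,P_{r+1}$ therefore produces the $2r+1$ rays $u_1,\ldots,u_{r+1}$ (strict transforms $D_{r+1},\ldots,D_{2r+1}$) together with $-u_2,\ldots,-u_{r+1}$ (exceptional divisors $D_1,\ldots,D_r$). The crucial structural feature is that these rays organize into $r$ antipodal pairs $(D_i,D_{i+r+1})$ with opposite primitive directions $\pm u_{i+1}$, plus the single unpaired ray $u_1=e_1$ of $D_{r+1}$. This is exactly the higher-dimensional analogue of the opposite pair $(D_2,D_4)$ together with the remaining rays in the Hirzebruch picture, and it is what produces the product of binomials in the conjectured formula.

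Next, I would choose the $n$ point insertions $x_1,\ldots,x_n$ in $\R^r$ in general position but adapted to $\Sigma$, generalizing the chamber-by-chamber choice made for $\mathcal{H}_a$: place $x_1$ at the origin and distribute the remaining $n-1$ points so that, for each antipodal pair, the split between the two half-spaces cut out by the corresponding coordinate direction realizes the binomial $\binom{n-1-|\mu_{i+r+1}|}{|\mu_i|}$. I would then prove the higher-dimensional analogue of Proposition \ref{prop: type A and B}: for this configuration every tropical curve $[h:\mathsf{C}\to\R^r]$ contributing to $\tropT$ is an embedding whose domain is a tree with a central vertex $V$ carrying $x_1$, from which emanate chains that each pass through one interior marked point and terminate in ends parallel to the rays of $\Sigma$, with two shapes (analogous to types A and B) according to whether a leaf ends in two ends or one. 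The enumeration of such types factors: choosing which interior marked points are routed toward the exceptional direction $D_i$ versus its antipodal strict transform $D_{i+r+1}$ gives the binomials $\prod_{i=1}^r\binom{n-1-|\mu_{i+r+1}|}{|\mu_i|}$, while ordering the ends on each divisor $D_i$ gives the factor $|\mu_i|!/\prod_{u\geq 1}|\{v : \mu_{i,v}=u\}|!$.

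Finally, I would compute the multiplicity with which each contributing curve is counted. By the definition of $\mathrm{degree}(\mathrm{trop}(\tau))$ via \cite{GKM}, this is the lattice index of the linear map induced by $\mathrm{trop}(\tau)$ on the tangent space of the corresponding maximal cone of $M^{\mathrm{trop}}(\R^r,\mathsf{\Gamma})$. Because the contributing curves are embedded trees of the prescribed star shape, I expect this index to factor as a product of local lattice indices attached to the vertices and ends; evaluating these on the standard fan, where every ray is a primitive lattice vector, should produce exactly $\prod_i\prod_j\mu_{i,j}$ and no power-of-$a$ factor (the latter being precisely the feature that distinguishes this case from $\mathcal{H}_a$, where the skew ray $n_1=(-1,a)$ contributes the power of $a$). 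Dividing by the ordering factor $\prod_i\prod_{u\geq 1}|\{v : \mu_{i,v}=u\}|!$ from the definition of $\tropT$ then assembles the claimed closed formula.

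I expect the main obstacle to be the classification step: proving that, in arbitrary dimension $r$, no combinatorial type other than the generalized type-A and type-B trees can contribute, and that each such tree is rigid and embedded for the chosen configuration. In the Hirzebruch case this was carried out by an explicit low-dimensional case analysis visible in the figures, but for general $r$ the vertices can have higher valence and the chains can branch in several coordinate directions, so one needs a genuinely structural argument — most plausibly an inductive projection onto the coordinate hyperplanes, reducing a contributing curve in $\R^r$ to lower-dimensional contributing curves and thereby propagating both the binomial count and the multiplicity factorization. Controlling the diagonal antipodal pair $(D_r,D_{2r+1})$, with directions $\pm(e_1+\cdots+e_r)$ not aligned with any single coordinate, will require extra care in this induction.
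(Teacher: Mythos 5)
First, a point of order: the paper does not prove this statement. It appears only as a conjecture in the ``Further directions'' section, so there is no proof of record to compare yours against. Judged on its own terms, your outline is the natural one and matches the strategy the authors themselves say should generalize: the fan computation is correct (blowing up the $r$ torus-fixed points star-subdivides the corresponding maximal cones and adds the rays $-v_2,\ldots,-v_{r+1}$, producing $r$ antipodal pairs plus the lone unpaired ray $v_1$), the reduction $\vT=\tropT$ via Theorem \ref{thm: correspondence} is available in genus $0$, and the central-vertex structure of Lemma \ref{lemma: central vertex} does go through in any dimension.

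However, the steps you defer are genuine gaps, and the second half of your plan underestimates them. The classification of contributing curves is not a two-type dichotomy once $r>2$. Repeating the dimension count behind Corollary \ref{description curves}: a leaf with $k$ non-contracted ends carries $k$ bounded edges, so rigidity forces $k\le r$, and the deficiencies $r-k$ summed over all leaves must equal $0$ (the analogue of type A, with $p_1$ at the central vertex) or $r$ (the analogue of type B, with $h(V)$ pinned to a codimension-$r$ locus by the deficient leaves). For $r\ge 3$ the latter case splits into many combinatorial families, indexed by the partitions of $r$ into leaf deficiencies and by which walls of the fan the image of $V$ lies on; each family needs its own existence count, its own multiplicity, and its own exclusion argument in the style of \S\ref{sec:exclusion of further contributions} --- already the most delicate part of the $r=2$ proof. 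Your proposal gives no mechanism for showing that these extra families either do not occur for the chosen point configuration or telescope (as the type-B curves do via the Vandermonde manipulation around \eqref{eqn: contribution 2.2}) into the conjectured product of binomials. Separately, the claim that every local $\mathrm{ev}$-multiplicity contributes only the weights $\mu_{i,j}$ is asserted rather than proved: in $\R^r$ these multiplicities are $r\times r$ determinants built from edge directions that are integer combinations of the rays $\pm e_i$ and $\pm(e_1+\cdots+e_r)$, and one must verify they are all $\pm1$ for every contributing type. These two points are precisely why the statement is left as a conjecture, and your proposal, while pointed in the right direction, does not close either of them.
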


Finally, we expect a more complicated formula could be obtained with our method for the blow-up of $\P^r$ at the $r+1$ torus fixed points (with any tangencies with the toric boundary).

\subsection*{Acknowledgments}
This project began with the participation of the first author in the MSRI summer school titled "Tropical Geometry" at St. Mary's College in Moraga, California, in August 2022. The first author is deeply grateful to the organizers, Renzo Cavalieri, Hannah Markwig, and Dhruv Ranganathan, for teaching him tropical and logarithmic geometry. We would also like to thank these three researchers for their invaluable assistance with this project during their visit to ETH Zurich in the spring semester of 2023. Lastly, we thank Gavril Farkas, Carl Lian, Rahul Pandharipande, and Johannes Schmitt for several useful discussions regarding fixed domain curve counts. A.C. received support from SNF-200020-182181. A.I.L. was supported by
ERC-2017-AdG-786580-MACI. The project received funding from the
European Research Council (ERC) under the European Union Horizon
2020 research and innovation programme (grant agreement 786580).

\section{The correspondence theorem}\label{Correspondence}

In this section, we assume familiarity with the intersection theory on balanced fans (see \cite{Goldner} for an introduction). The starting point to prove the correspondence theorem \ref{thm: correspondence} are \cite{Tyo, Goldner}.

\begin{lemma}\label{lemma: reduction to crossratios}
    The natural maps 
    $
     \oM_{0,n} \to \prod_{i=4}^n \oM_{0,\{1,2,3,i\}}$
    and
    $M^{\mathrm{trop}}_{0,n} \to \prod_{i=4}^n M_{0,\{1,2,3,i\}}^{\mathrm{trop}}
    $
    have degree $1$.
\end{lemma}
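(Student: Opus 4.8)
The plan is to handle both statements in parallel, exploiting that the $i$-th factor of the target records the (tropical) cross-ratio $\lambda_i$ of the four marked points $1,2,3,i$, so that each map is the assignment of the $(n-3)$-tuple $(\lambda_4,\dots,\lambda_n)$. In both cases source and target are irreducible of dimension $n-3$, so the degree is well defined (tropically by \cite[Corollary 2.26]{GKM}, and algebraically because a dominant morphism of irreducible varieties of equal dimension is generically finite). It therefore suffices to produce a single point of the target over which the fibre, counted with multiplicity, has length $1$.

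For the algebraic map I would restrict to the dense open $M_{0,n}\subset\oM_{0,n}$ of smooth curves with distinct markings and use the $\mathrm{PGL}_2$-action to normalise $(p_1,p_2,p_3)=(0,1,\infty)$. A point of $M_{0,n}$ is then the datum of an ordered tuple $(p_4,\dots,p_n)$ of distinct points of $\P^1\setminus\{0,1,\infty\}$, and under the cross-ratio identification $\oM_{0,\{1,2,3,i\}}\cong\P^1$ the $i$-th forgetful map sends such a configuration to $\lambda_i=p_i$. Hence on this open set the product map is the tautological open immersion $(p_4,\dots,p_n)\mapsto(p_4,\dots,p_n)$; being birational onto its image, it has degree $1$.

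For the tropical map the real work begins. I would view $M^{\mathrm{trop}}_{0,n}$ as the space of metric trees with $n$ leaves, recalling that $\lambda_i=\mathrm{ft}_{123i}$ is the length of the bounded edge of the four-leaf tree obtained by forgetting all markings except $1,2,3,i$. I would then evaluate the degree at the convenient generic point with all quartet partitions equal to $\{12\mid 3i\}$ and with strictly decreasing lengths $\lambda_4>\lambda_5>\dots>\lambda_n>0$. The assertion is that the unique combinatorial type lying over this point is the caterpillar with cherries $\{1,2\}$ and $\{3,4\}$ whose spine carries the remaining leaves in the order $n,n-1,\dots,5$ read from the $\{1,2\}$-end; taking the bounded-edge lengths $\ell_1,\dots,\ell_{n-3}$ as coordinates on its maximal cone, one computes $\lambda_i=\ell_1+\cdots+\ell_{n-i+1}$ for $i\ge 5$ and $\lambda_4=\ell_1+\cdots+\ell_{n-3}$, so that $(\ell_j)\mapsto(\lambda_i)$ is a triangular integral linear map of determinant $\pm1$. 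Thus the chosen point is covered by a single maximal cone with lattice index $1$, giving degree $1$.

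The step I expect to be the main obstacle is precisely the uniqueness of this preimage cone, since the map of cone complexes is not a bijection: $M^{\mathrm{trop}}_{0,n}$ has strictly more maximal cones than $\prod_{i=4}^n M^{\mathrm{trop}}_{0,4}$, and many of its cones map with lower-dimensional image. I would dispatch this by a reconstruction argument: requiring $\{12\mid 3i\}$ for every $i$ forbids any leaf from branching off the segment joining the leaves $1$ and $2$, forcing $\{1,2\}$ to be a cherry, while the strict chain $\lambda_4>\dots>\lambda_n$ fixes both the far cherry $\{3,4\}$ and the linear order of the remaining leaves along the spine, leaving no combinatorial freedom. Should this direct count prove delicate in general, an alternative is to induct on $n$ via the forgetful morphism $M^{\mathrm{trop}}_{0,n}\to M^{\mathrm{trop}}_{0,n-1}$ together with multiplicativity of tropical degrees, the base case $n=4$ being the identity map.
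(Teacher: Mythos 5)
Your argument is correct, and the two halves split exactly where the paper's does, but the tropical half takes a genuinely different route. The paper also treats the algebraic map as immediate (your $\mathrm{PGL}_2$-normalisation showing the product of cross-ratio maps is birational onto its image is just the explicit form of ``clear''), but it then \emph{deduces} the tropical statement from the algebraic one by citing Gross's tropicalization theorem \cite[Theorem 4.1]{AndreasGross}, which identifies the degree of the tropical forgetful map with that of its algebraic counterpart. You instead compute the tropical degree directly: you choose a generic target point (all quartets of type $12\mid 3i$ with $\lambda_4>\cdots>\lambda_n>0$), show by the branch-point/median argument that the only tree over it is the caterpillar with cherries $\{1,2\}$ and $\{3,4\}$, and check that the edge lengths map to the $\lambda_i$ by a unitriangular integral matrix. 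This reconstruction is sound: positivity of every $\lambda_i$ forces $\{1,2\}$ to be a cherry at a vertex $c$, the distinctness of the $\lambda_i$ makes the medians of $\{c,3,i\}$ pairwise distinct points on the $c$--$3$ path, and counting leaves shows each branch is a single leaf edge. What the paper's route buys is brevity at the cost of invoking the toroidal tropicalization machinery; what yours buys is a self-contained combinatorial proof, at the cost of having to use (implicitly) the standard facts that all maximal cones of $M^{\mathrm{trop}}_{0,n}$ carry weight $1$ and that the bounded edge lengths are unimodular lattice coordinates on each maximal cone, so that the unitriangular matrix really computes the local index. Those facts are in \cite{GKM}, so the gap is only one of citation; either proof is acceptable.
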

\begin{proof}
    The statement is clear for the first map and for the second follows from the first map having degree $1$ and \cite[Theorem 4.1]{AndreasGross}.
\end{proof}

The degree of the map \ref{tau} is then equal to the degree of the map
\begin{equation}
 \bigg( \prod_{i=4}^n \mathrm{ft}_i \bigg) \times \mathrm{ev}: \oM_\mathsf{\Gamma}(X) \to \prod_{i=4}^n \oM_{0,\{1,2,3,i\}} \times X^n
\end{equation}
obtained by the forgetful morphisms $\mathrm{ft}_i : \oM_\mathsf{\Gamma}(X) \to \oM_{0,\{1,2,3,i\}}$ for $i=4, \ldots, n$ and the evaluation map. 

Similarly, the degree of the map \ref{tautrop} is equal to the degree of 
\begin{equation}\label{ft ev map}
 \bigg( \prod_{i=4}^n \mathrm{ft}_i \bigg) \times \mathrm{ev}: M^{\mathrm{trop}}(\R^r,\mathsf{\Gamma}) \to \prod_{i=4}^n M_{0,\{1,2,3,i\}}^{\mathrm{trop}} \times (\R^r)^n
\end{equation}

The correspondence theorem \ref{thm: correspondence} will now follow from \cite[Theorem 5.1]{Tyo}, a generalization of \cite[Proposition 3.5]{Goldner} to higher dimensions and next two lemmas.


\begin{lemma}\label{lemma: irreducibility of moduli spaces}
    In genus $0$ the moduli spaces $\oM_\mathsf{\Gamma}(X)$ are irreducible, generically of expected dimension. Moreover,
    $$
    [\oM_\mathsf{\Gamma}(X)]= [\oM_\mathsf{\Gamma}(X)]^{\mathrm{vir}} \in A_*(\oM_\mathsf{\Gamma}(X)).
    $$
\end{lemma}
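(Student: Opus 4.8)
The plan is to separate the statement into two parts: the equality of classes together with the dimension statement, which I would derive from the triviality of the logarithmic tangent bundle, and irreducibility, which I would treat by a direct parametrization of the locus of maps with smooth domain. For the smooth toric variety $X$ with toric boundary $\partial X$, the infinitesimal action of the dense torus identifies the logarithmic tangent sheaf with a trivial bundle,
$$
T_X(-\log \partial X) \cong N \otimes_{\Z} \mathcal{O}_X \cong \mathcal{O}_X^{\oplus r},
$$
where $N$ is the cocharacter lattice and $r=\dim X$. By \cite{GS}, the stack $\oM_\mathsf{\Gamma}(X)$ carries a perfect obstruction theory relative to the smooth Artin stack $\mathfrak{M}$ of prestable log curves, given by $R\pi_* f^* T_X(-\log \partial X)$, where $\pi$ and $f$ are the universal curve and universal map. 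Pulling back the trivialization yields $f^* T_X(-\log \partial X)\cong \mathcal{O}_C^{\oplus r}$, so the obstruction sheaf is $\big(R^1\pi_*\mathcal{O}_C\big)^{\oplus r}$.

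The key point is then that, since every fiber of $\pi$ is a genus $0$ (hence tree-like) curve, $H^1(C,\mathcal{O}_C)=0$ and this obstruction sheaf vanishes identically. Consequently the relative obstruction theory is that of a smooth morphism over the smooth base $\mathfrak{M}$: the theory has tautological amplitude concentrated in degree zero, so $\oM_\mathsf{\Gamma}(X)$ is unobstructed, pure of the expected (virtual) dimension, and in particular generically of expected dimension. Because the obstruction sheaf is zero, the intrinsic normal cone is the zero section of a vector bundle stack, whence
$$
[\oM_\mathsf{\Gamma}(X)]^{\mathrm{vir}}=[\oM_\mathsf{\Gamma}(X)]\in A_*(\oM_\mathsf{\Gamma}(X)).
$$
This disposes of every claim except irreducibility, and also gives that $\oM_\mathsf{\Gamma}(X)$ is smooth, so that connected components coincide with irreducible components.

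For irreducibility I would first show that the open locus $\oM_\mathsf{\Gamma}(X)^{\circ}$ of maps with smooth irreducible domain is irreducible, using the Cox homogeneous description. A map $f\colon \P^1\to X$ of class $\beta$ is a tuple of sections $(s_\rho)_{\rho\in\Sigma(1)}$ with $s_\rho\in H^0(\P^1,\mathcal{O}(D_\rho\cdot\beta))$, taken modulo the Cox torus $G$; the prescribed contact order $\mu_i$ along $D_i$ forces $\mathrm{div}(s_{\rho_i})=\sum_j \mu_{i,j}\,q_{i,j}$, and since $\sum_j \mu_{i,j}=D_{\rho_i}\cdot\beta$, each $s_{\rho_i}$ is determined up to scaling by the positions of the contact markings $q_{i,j}$. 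Hence $\oM_\mathsf{\Gamma}(X)^{\circ}$ is a quotient of an open subset of a product of configuration spaces of marked points on $\P^1$ with the torus of scalings $(\C^*)^{\Sigma(1)}$, modulo the connected groups $\mathrm{PGL}_2$ and $G$; being the quotient of an irreducible variety by connected groups, it is irreducible.

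The hard part will be density of $\oM_\mathsf{\Gamma}(X)^{\circ}$, equivalently that no irreducible component is contained in the locus of nodal domains; this is what prevents concluding irreducibility from pure-dimensionality and the irreducible open alone. I would establish it by a smoothing argument: since the theory is unobstructed, every boundary stratum of fixed tropical type can be deformed by smoothing its nodes — equivalently, by deforming the edge lengths of the corresponding tropical curve toward a generic point of the top-dimensional (trivalent) cone — so that it lies in the closure of $\oM_\mathsf{\Gamma}(X)^{\circ}$. This is precisely where the tree structure of genus $0$ domains and log smoothness over $\mathfrak{M}$ enter. Alternatively, density can be read off from the cone structure of $M^{\mathrm{trop}}(\R^r,\mathsf{\Gamma})$, in which every type degenerates to the maximal stratum. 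Combining the irreducible, dense open locus with smoothness then yields irreducibility of $\oM_\mathsf{\Gamma}(X)$.
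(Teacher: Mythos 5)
The first half of your argument is essentially the paper's own proof: both use that $T_X^{\mathrm{log}}\cong\mathcal{O}_X^{\oplus r}$, that $H^1(C,\mathcal{O}_C)=0$ in genus $0$, and hence that the relative obstruction theory is a rank-$r$ vector bundle concentrated in degree $0$, giving $[\oM_\mathsf{\Gamma}(X)]=[\oM_\mathsf{\Gamma}(X)]^{\mathrm{vir}}$ and purity of the expected dimension. The one correction: the base of the obstruction theory of \cite{GS} is not the smooth stack of prestable curves but Olsson's stack $\mathcal{L}og_{\mathfrak{M}_{0,n+m}}$, which is equidimensional but neither smooth nor irreducible. So what you get is smoothness of the map to $\mathcal{L}og_{\mathfrak{M}_{0,n+m}}$, hence log smoothness (so toroidality and normality) of $\oM_\mathsf{\Gamma}(X)$ --- not smoothness. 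This is harmless for your final step (an irreducible dense open already forces irreducibility, with no smoothness needed), but it matters for the next point.

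For irreducibility the paper simply cites \cite[Proposition 3.3.5]{RanganathanI}, whereas you attempt a direct proof. Your Cox-ring parametrization of the locus of maps with smooth domain is fine and does show that open locus is irreducible. The genuine gap is the density step. Unobstructedness \emph{relative to} $\mathcal{L}og_{\mathfrak{M}_{0,n+m}}$ does not let you smooth nodes: smoothing a node changes the underlying point of $\mathcal{L}og_{\mathfrak{M}_{0,n+m}}$ (both the prestable curve and its log structure), and since that base has many strata of the same dimension, smoothness over it is compatible with a whole component of $\oM_\mathsf{\Gamma}(X)$ sitting over a boundary stratum. Likewise, the fact that every cone of $M^{\mathrm{trop}}(\R^r,\mathsf{\Gamma})$ is a face of a maximal one is a statement about the tropical fan; transporting it to the algebraic moduli space is precisely the tropical realizability/lifting theorem of Nishinou--Siebert and Ranganathan (\cite[Propositions 3.3.3 and 3.3.5]{RanganathanI}), which is the nontrivial content here. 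So the step you label ``the hard part'' is exactly where the proof lives, and the justification you give does not close it; either supply the lifting argument or cite it as the paper does.
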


\begin{proof}
    Let $\mathfrak{M}_{0,n}$ be the moduli space of genus $0$ and $n$-marked prestable curves, endowed with the logarithmic structure given by $\partial \mathfrak{M}_{0,n+m}$. Let also $\mathcal{L}og_{\mathfrak{M}_{0,n+m}}$ be the stack constructed in \cite{OLSSON}. It has pure dimension $-3+n+m$. Consider the natural (strict) logarithmic map
    $$
    \varphi: \oM_{\mathsf{\Gamma}}(X) \to \mathcal{L}og_{\mathfrak{M}_{0,n+m}}
    $$
    The relative perfect obstruction theory (see \cite[Section 5]{GS}) is
    $$
    E^\bullet=Rp_*(f^*T_X^{\mathrm{log}})^ \vee \to L^\bullet_{\oM_{\mathsf{\Gamma}}(X) /\mathcal{L}og_{\mathfrak{M}_{0,n+m}}}
    $$ where $p:\mathcal{C} \to \oM_{\mathsf{\Gamma}}(X)$ is the universal curve and $f: \mathcal{C} \to X$ is the universal map. In our situation, we have $T_X^{\mathrm{log}}= \mathcal{O}_X^r$ and therefore $E^\bullet \cong \Omega_{\oM_{\mathsf{\Gamma}}(X) /\mathcal{L}og_{\mathfrak{M}_{0,n+m}}}$ is a vector bundle with fiber over $[f:C \to X]$ given by $(H^0(C, \mathcal{O})^r)^\vee$. It follows that $\varphi$ is smooth (and therefore log-smooth) and that $[\oM_\mathsf{\Gamma}(X)]= [\oM_\mathsf{\Gamma}(X)]^{\mathrm{vir}}$. The irreducibility statement is \cite[Proposition 3.3.5]{RanganathanI}.
\end{proof}


    

\begin{remark}
    The proof of \cite[Proposition 3.5]{Goldner} straightforwardly generalizes to the higher-dimensional setting. In the proof of Theorem \ref{thm: correspondence} below, we will use its higher-dimensional generalization.
\end{remark}

\begin{proof}[Proof of Theorem \ref{thm: correspondence}]
    By Lemma \ref{lemma: irreducibility of moduli spaces}, $\vT$ equals the \textit{geometric} numbers of logarithmic stable maps from a fixed general curve $(C, p_1,...,p_n)$ to $X$ with contact orders prescribed by $c$. By Lemma \ref{lemma: reduction to crossratios}, \cite[Theorem 5.1]{Tyo} and the higher-dimensional generalization of \cite[Proposition 3.5]{Goldner}, the number of such maps where in addition the domain curve is smooth and the image of the map does not meet any toric point of $X$ is equal to the degree of $\mathrm{trop}(\tau)$. In order to conclude it is then enough to exclude contributions in $\vT$ from $\partial \oM_\mathsf{\Gamma}(X)$ and from the locus $B$ of maps containing toric points in their image. The boundary $\partial \oM_\mathsf{\Gamma}(X)$ and $B$ are both proper subsets of $\oM_\mathsf{\Gamma}(X)$ \cite[Proposition 3.3.3]{RanganathanI} so, by lemma \ref{lemma: irreducibility of moduli spaces}, they cannot dominate $\oM_{0,n} \times X^n$ for dimensional reasons.   
\end{proof}

\section{Genus \texorpdfstring{$0$} {} tropical count for Hirzebruch surfaces}\label{main formula}

In this section we calculate $\tropT$ for tropical maps to Hirzebruch surfaces.

\subsection{Tropical curves and intersection theory}\label{sec: intersection theory and tropical defns}

The moduli spaces $M^{\mathrm{trop}}_{0,n}$ and $M^{\mathrm{trop}}(\R^n, \Gamma)$ are fans and therefore the machinery of tropical intersection theory developed in \cite{Allermann}, can be applied to it.\\
All the results in this sections are valid for any toric surface, or in general a toric variety of any dimension after appropriate modifications, specially to Corollary \ref{description curves}.\\ 

\begin{notation}
   For a morphism of fans, there is a notion of pullback of Cartier divisors. For a point $\bar{x} =(\bar{x}_1,\bar{x}_2) \in \R^2$ and any affine cycle $Z$ of $M^{\mathrm{trop}}(\R^n, \Gamma)$, we make the abbreviation 
   $$
   \mathrm{ev}^*(p)\cdot Z = (\mathrm{pr}_1 \circ\mathrm{ev})^*(\bar{x}_1)\cdot (\mathrm{pr}_2 \circ\mathrm{ev})^*(\bar{x}_2)\cdot Z,
   $$
   where $\mathrm{pr}_i: \R^2 \to \R$ are the two projections and $\bar{x}_i \in \R$ is regarded as a Cartier divisor for $i=1,2$.
\end{notation}

\begin{lemma}
    The degree of $\operatorname{trop} (\tau)$ is equal to the degree of the tropical $0$-cycle 
    \begin{equation}\label{eqn: our count}
        \prod_{i=4}^n \mathrm{ft}_i^*(0) \cdot \prod_{i=1}^n \mathrm{ev}_i^*(x_i) \cdot M^{\mathrm{trop}}(\R^2, \Gamma),
    \end{equation}
    where $\mathrm{ft}_i : M^{\mathrm{trop}}(\R^2, \Gamma) \to M^{\mathrm{trop}}_{0,\{1,2,3,i\}}$, where $0$ represents the unique $4$-valent curve in $M_{0,4}^{\mathrm{trop}}$, and $x_i \in \R^2$.
\end{lemma}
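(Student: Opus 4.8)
The plan is to rewrite $\mathrm{degree}(\mathrm{trop}(\tau))$ as a tropical intersection number by means of the projection formula. By Lemma~\ref{lemma: reduction to crossratios} the map $M_{0,n}^{\mathrm{trop}} \to \prod_{i=4}^n M_{0,\{1,2,3,i\}}^{\mathrm{trop}}$ has degree $1$; postcomposing $\mathrm{trop}(\tau)$ with this map (times the identity on $(\R^2)^n$) produces the morphism
\[
f = \Bigg( \prod_{i=4}^n \mathrm{ft}_i \Bigg) \times \mathrm{ev}: M^{\mathrm{trop}}(\R^2, \Gamma) \to \prod_{i=4}^n M_{0,\{1,2,3,i\}}^{\mathrm{trop}} \times (\R^2)^n
\]
of \eqref{ft ev map}, so that $\mathrm{degree}(\mathrm{trop}(\tau)) = \mathrm{degree}(f)$. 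Source and target are equidimensional tropical fans of the same dimension $3n-3$, and the target is irreducible as a product of irreducible fans, so by \cite[Corollary 2.26]{GKM} the degree is well-defined and $f_*[M^{\mathrm{trop}}(\R^2, \Gamma)] = \mathrm{degree}(f)\,[\,\text{target}\,]$.

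The next step is to realise a single reduced point of the target as a product of Cartier divisors, one factor at a time. On each copy of $\R^2$ the point $x_i = (x_{i,1}, x_{i,2})$ is cut out by the two coordinate divisors $\mathrm{pr}_1^*(x_{i,1})$ and $\mathrm{pr}_2^*(x_{i,2})$, whose intersection is $x_i$ with lattice multiplicity $1$. On each $M_{0,\{1,2,3,i\}}^{\mathrm{trop}}$, the one-dimensional fan whose three rays meet at the $4$-valent vertex $0$, the vertex can be realised as the divisor of the piecewise-linear function with outgoing slopes $(1,0,0)$, which by the balancing relation at the vertex has weight $1$. Taking the product over all $2n + (n-3) = 3n-3$ divisors yields a Cartier-divisor expression whose intersection with the fundamental cycle of the target is a single point of degree $1$.

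I would then invoke the projection formula of tropical intersection theory, $f_*(f^*D \cdot Z) = D \cdot f_* Z$ for a Cartier divisor $D$ and a cycle $Z$ (see \cite{Allermann}). Pulling the chosen divisors back along $f$ turns the vertex condition on the $i$-th cross-ratio factor into $\mathrm{ft}_i^*(0)$ for $i = 4, \ldots, n$, and the point condition on the $i$-th copy of $\R^2$ into $\mathrm{ev}_i^*(x_i) = (\mathrm{pr}_1 \circ \mathrm{ev}_i)^*(x_{i,1}) \cdot (\mathrm{pr}_2 \circ \mathrm{ev}_i)^*(x_{i,2})$ for $i = 1, \ldots, n$, which is exactly the expression of \eqref{eqn: our count}. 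Combining the projection formula with $f_*[M^{\mathrm{trop}}(\R^2, \Gamma)] = \mathrm{degree}(f)\,[\,\text{target}\,]$ and the fact that the chosen point has degree $1$ gives
\[
\deg\Bigg( \prod_{i=4}^n \mathrm{ft}_i^*(0) \cdot \prod_{i=1}^n \mathrm{ev}_i^*(x_i) \cdot M^{\mathrm{trop}}(\R^2, \Gamma) \Bigg) = \mathrm{degree}(f) = \mathrm{degree}(\mathrm{trop}(\tau)),
\]
which is the assertion.

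The point requiring care — and the main obstacle in the argument — is the use of the $4$-valent vertex $0$, rather than a generic interior point of a ray, in the cross-ratio factors: $0$ does not lie in the interior of a maximal cone of $M_{0,4}^{\mathrm{trop}}$. This is nonetheless legitimate because the degree of the pullback $0$-cycle depends only on the rational-equivalence class of the imposed conditions, so any representative of the degree-$1$ point class computes the same number; this is precisely the intersection-theoretic viewpoint of \S\ref{sec: intersection theory and tropical defns}, and it is what will later allow us to take the stabilized domain curve to have all bounded-edge lengths equal to $0$. The remaining verifications — that the coordinate divisors on $\R^2$ and the slope-$(1,0,0)$ function on $M_{0,4}^{\mathrm{trop}}$ each contribute lattice multiplicity $1$, so that the chosen point genuinely has degree $1$ — are immediate.
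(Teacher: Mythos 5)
Your argument is correct and follows essentially the same route as the paper: both reduce $\mathrm{trop}(\tau)$ to the map $\big(\prod_{i=4}^n \mathrm{ft}_i\big)\times\mathrm{ev}$ via Lemma \ref{lemma: reduction to crossratios}, express its degree as the degree of the $0$-cycle obtained by pulling back Cartier divisors cutting out a point of the target (the paper cites \cite[Lemma 1.2.9]{Rau} where you unpack the same content via the projection formula), and justify placing the cross-ratio conditions at the $4$-valent vertex $0$ by rational equivalence of points in $M^{\mathrm{trop}}_{0,4}$. No gaps.
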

\begin{proof}
    Let $ ((\lambda_1, \ldots, \lambda_{n-3}),(x_1, \ldots, x_n)) \in \left(M^{\mathrm{trop}}_{0,4}\right)^{n-3} \times (\R^{2})^n$ be a general point and let $g=\bigg( \prod_{i=4}^n \mathrm{ft}_i \bigg) \times \mathrm{ev}$ be the morphism of tropical fans in \ref{ft ev map}. The equality
    $$
    \deg(g) = \deg \left( \prod_{i=4}^n \mathrm{ft}_i^*(\lambda_i) \cdot \prod_{i=1}^n \mathrm{ev}_i^*(x_i) \cdot M^{\mathrm{trop}}(\R^2, \Gamma)\right)
    $$
    follows essentially from \cite[Lemma 1.2.9]{Rau}, and the equality
    $$
    \deg \left( \prod_{i=4}^n \mathrm{ft}_i^*(\lambda_i) \cdot \prod_{i=1}^n \mathrm{ev}_i^*(x_i) \cdot M^{\mathrm{trop}}(\R^2, \Gamma)\right)=\deg \left( \prod_{i=4}^n \mathrm{ft}_i^*(0) \cdot \prod_{i=1}^n \mathrm{ev}_i^*(x_i) \cdot M^{\mathrm{trop}}(\R^2, \Gamma)\right)
    $$
    follows from the fact that pullbacks, intersections and the degree are defined up to rational equivalence, and all points in $M^{\mathrm{trop}}_{0,4}$ are rationally equivalent.
\end{proof}


\begin{lemma}\label{lemma: central vertex}
    Consider the tropical affine cycle
    $$
    Z = \prod_{i=4}^n \mathrm{ft}_i^*(0)\cdot M^{\mathrm{trop}}(\R^2, \Gamma)
    $$
    Then the combinatorial type of any top-dimensional polyhedron of $Z$ is such that one vertex $V$ is $n$-valent, the rest of them are 3-valent, and for each each edge is contained in the unique path to a unique marking. Moreover, the weight of all these polyhedra is $1$.
\end{lemma}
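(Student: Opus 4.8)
The plan is to recognize $Z$ as the pullback of the origin of $M_{0,n}^{\mathrm{trop}}$, read off its support combinatorially, and compute the multiplicities by a local analysis of the stabilization map. First I would use that each forgetful map factors as $\mathrm{ft}_i = \mathrm{pr}_i \circ \mathrm{ft}$, where $\mathrm{ft}\colon M^{\mathrm{trop}}(\R^2,\Gamma) \to M_{0,n}^{\mathrm{trop}}$ stabilizes the domain and $\mathrm{pr}_i\colon M_{0,n}^{\mathrm{trop}} \to M_{0,\{1,2,3,i\}}^{\mathrm{trop}}$ forgets all markings but $1,2,3,i$. By functoriality of the pullback of Cartier divisors,
$$
Z = \mathrm{ft}^*\!\Big( \prod_{i=4}^n \mathrm{pr}_i^*(0)\Big) \cdot M^{\mathrm{trop}}(\R^2,\Gamma).
$$
The combined map $\prod_{i}\mathrm{pr}_i$ has degree $1$ by Lemma \ref{lemma: reduction to crossratios}, and the preimage of the origin of the product is the single point of $M_{0,n}^{\mathrm{trop}}$ at which all markings coincide; hence $\prod_{i=4}^n \mathrm{pr}_i^*(0) = [0]$, the origin with weight $1$, and $Z = \mathrm{ft}^*[0]\cdot M^{\mathrm{trop}}(\R^2,\Gamma)$.

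Since $\mathrm{ft}^*[0]$ is supported on $\mathrm{ft}^{-1}(0)$, I would next describe which combinatorial types $\tau$ map into the origin. Stabilizing a type means forgetting the $m$ non-contracted ends and contracting the resulting vertices of valence $\le 2$, and the outcome is the $n$-valent star (the origin) exactly when a single vertex $V$ survives the stabilization. On the support, contracting the ends therefore collapses the whole marking-spanning subtree onto $V$, so every bounded edge lies on the path from $V$ to a single marking; this already gives the qualitative shape in the statement.

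To isolate the top-dimensional types I would count dimensions. For any such tree type the balancing condition determines every edge direction — propagating inward, the direction of the edge of a branch incident to $V$ equals the sum of the directions of the ends in that branch — so there is no directional freedom and the cone has dimension $\#\{\text{bounded edges}\}+2$. As $\dim M^{\mathrm{trop}}(\R^2,\Gamma)=3n-3$ and the $n-3$ conditions impose codimension $n-3$, one has $\dim Z = 2n$, forcing $\#\{\text{bounded edges}\}=m=2(n-1)$; maximality of the edge count then forces trivalence away from $V$ (merging vertices only decreases it), and a global valence count gives that $V$ is $n$-valent. Finally, because $V$ is the unique vertex of valence $\ge 3$ after forgetting ends, every branch is an unbranched path whose only marking sits at its far end; since $V$ is $n$-valent and there are $n$ markings, the branches carry the markings bijectively and there is no end-only branch. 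This is precisely the asserted type.

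For the weights I would argue by a local isomorphism. Near such a $\tau$, the maximal cones of $M^{\mathrm{trop}}(\R^2,\Gamma)$ containing it are obtained by resolving the $n$-valent vertex $V$ into a trivalent tree on its $n$ branches; each new edge separates the markings into two groups of size $\ge 2$, so it survives stabilization with unchanged lattice length. Thus $\mathrm{ft}$ sends these resolutions, together with their lengths, isomorphically onto the cones of $M_{0,n}^{\mathrm{trop}}$ around the origin, i.e. it restricts to a lattice isomorphism from the directions transverse to $\tau$ onto the star of $0$, so $\tau$ inherits the weight of $[0]$, namely $1$. I expect this last step to be the main obstacle: one must check that stabilization matches the normal fan of $\tau$ with the star of the origin in $M_{0,n}^{\mathrm{trop}}$ in a lattice-preserving way, so that no extra multiplicity is introduced. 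The determinacy of all edge directions by balancing and the bijection between the flags at $V$ and the $n$ markings are exactly what make this identification exact, but verifying it carefully is where the content lies.
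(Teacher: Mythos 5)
Your route is genuinely different from the paper's: the paper deduces both the combinatorial description and the weights from Goldner's Lemma 3.11 on cross-ratio pullbacks (the valence formula $\mathrm{val}(W)=3+|\{i\mid V_i=W\}|$ and the count of resolutions), whereas you try to package the $n-3$ conditions into the single stabilization map $\mathrm{ft}\colon M^{\mathrm{trop}}(\R^2,\Gamma)\to M^{\mathrm{trop}}_{0,n}$ and then argue by a dimension count plus a local product structure. The weight computation for the good types (resolutions of the $n$-valent vertex mapping lattice-isomorphically onto the cones of $M^{\mathrm{trop}}_{0,n}$ around the origin) is sound and is an attractive replacement for the citation. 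However, there is a genuine gap at the very first reduction. In this intersection theory one can only pull back Cartier divisors, not cycles, so what you actually control is that the support of $Z=\prod_i\mathrm{ft}_i^*(0)\cdot M^{\mathrm{trop}}(\R^2,\Gamma)$ lies in $\bigcap_{i=4}^n\mathrm{ft}_i^{-1}(0)$, and this is \emph{strictly} larger than $\mathrm{ft}^{-1}(0)$: for example, any combinatorial type in which a single bounded edge separates $\{p_4,p_5\}$ from the remaining markings satisfies $\mathrm{ft}_i([h])=0$ for every $i$ (each quadruple $\{p_1,p_2,p_3,p_i\}$ sees at most one marking on the far side of that edge), while its stabilization is not the origin of $M^{\mathrm{trop}}_{0,n}$. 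Such types can be arranged to have exactly $2(n-1)$ bounded edges, so your dimension count does not discard them; you must show their weight in $Z$ vanishes, and this is exactly the nontrivial content that the paper outsources to \cite{Goldner}. Your degree-$1$ computation of $\prod_i\mathrm{pr}_i^*(0)$ downstairs does not transport upstairs for free, because over these extra strata $\mathrm{ft}$ does not have the transverse product structure you use at the good types.

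A secondary, smaller issue: even on $\mathrm{ft}^{-1}(0)$, your conclusion that ``every branch is an unbranched path whose only marking sits at its far end'' is stronger than what the lemma asserts and is in fact false at this stage. Forgetting the non-contracted ends does not delete bounded edges of marking-free sub-branches, so a branch at $V$ may still carry a trivalent vertex from which a bounded, markingless subtree hangs; such types survive with weight $1$ in $Z$ and are only eliminated later, after imposing the point conditions, by the string argument in the proof of Corollary \ref{description curves}. What the dimension count together with the support condition does give you is that the paths from $V$ to the $n$ markings leave $V$ along $n$ pairwise distinct edges (so each branch carries exactly one marking), which is the statement actually needed here; I would recommend phrasing the conclusion at that level and either proving the vanishing of the weights on $\bigcap_i\mathrm{ft}_i^{-1}(0)\smallsetminus\mathrm{ft}^{-1}(0)$ directly or reverting to the cross-ratio analysis of \cite{Goldner} for that step.
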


\begin{proof}
    Let $[h:\mathsf{C} \to \R^2]$ be such a curve. Since for any $i=4, \ldots, n$ we have $\mathrm{ft}_i([h]) =0$, for each $i$ there must be a unique vertex $V_i$ in $\mathsf{C}$ and $4$ different edges $e_1, e_2, e_3, e_i$ attached to $V_i$ that are part of the unique path between $V_i$ and $p_1$, $p_2$, $p_3$ and $p_i$, respectively. In particular, $V_i$ must be contained in the unique path joining $p_1$, $p_2$, and in the unique path containing $p_1$, $p_3$. Therefore, there exists a vertex $V$ such that $V_i = V$ for all $i$. By \cite[Lemma 3.11]{Goldner}, for any vertex $W$ of $\mathsf{C}$
    $$
    \mathrm{val}(W) = 3+ |\{i \mid V_i = W\}|,
    $$
    And this implies the first assertion of the lemma.\\
    The multiplicities of such polyhedra are also computed in the same lemma \cite[Lemma 3.11]{Goldner}, and, in the case we are concerned with, they are all $1$. This amounts to say that there is only one (see Figure \ref{fig: resolving cross ratios} below) way of resolving the cross ratios $(p_1, p_2;p_3, p_i)$ for $i = 4, \ldots, n$.
\end{proof}

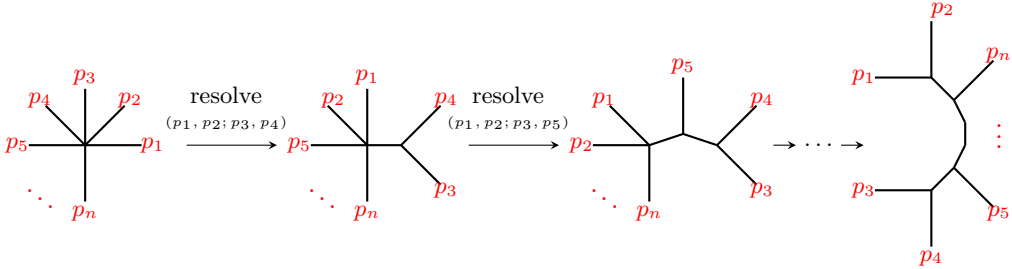
\begin{figure}[h]
    \centering
    \begin{tikzpicture}[xscale=0.15,yscale=0.15]
        \draw [thick] (0,0) to (5,0);
        \node[red] at (6,0) {\small $p_1$};
        \draw [thick] (0,0) to (3.5,3.5);
        \node[red] at (4,4) {\small $p_2$};
        \draw [thick] (0,0) to (0,5);
        \node[red] at (0,6) {\small $p_3$};
        \draw [thick] (0,0) to (-3.5,3.5);
        \node[red] at (-4,4) {\small $p_4$};
        \draw [thick] (0,0) to (-5,0);
        \node[red] at (-6,0) {\small $p_5$};
        \node[red] at (-4,-4) {$\ddots$};
        \draw [thick] (0,0) to (0,-5);
        \node[red] at (0,-6) {\small $p_n$};

        \draw [-stealth](9,0) -- (17,0);
        \node at (12.5, 4.5){\small resolve};
        \node at (12.5, 2){\tiny $(p_1, p_2; p_3, p_4)$};

        \draw [thick] (25,0) to (25,5);
        \node[red] at (25,6) {\small $p_1$};
        \draw [thick] (25,0) to (21.5,3.5);
        \node[red] at (22,4) {\small $p_2$};
        \draw [thick] (25,0) to (20,0);
        \node[red] at (19,0) {\small $p_5$};
        \node[red] at (21,-4) {$\ddots$};
        \draw [thick] (25,0) to (25,-5);
        \node[red] at (25,-6) {\small $p_n$};
        \draw[thick] (25,0) to (28,0);
        \draw[thick] (28,0) to (31.5,3.5);
        \node[red] at (32,4) {\small $p_4$};
        \draw[thick] (28,0) to (31.5,-3.5);
        \node[red] at (32,-4) {\small $p_3$};

        \draw [-stealth](34,0) -- (42,0);
        \node at (37.5, 4.5){\small resolve};
        \node at (37.5, 2){\tiny $(p_1, p_2; p_3, p_5)$};

        \draw [thick] (50,0) to (46.5,3.5);
        \node[red] at (46,4) {\small $p_1$};
        \draw [thick] (50,0) to (45,0);
        \node[red] at (44,0) {\small $p_2$};
        \node[red] at (46,-4) {$\ddots$};
        \draw [thick] (50,0) to (50,-5);
        \node[red] at (50,-6) {\small $p_n$};
        \draw[thick] (50,0) to (53,1);
        \draw[thick] (53,1) to (53,6);
        \node[red] at (53,7) {\small $p_5$};
        \draw[thick] (53,1) to (56, 0);
        \draw[thick] (56,0) to (59.5, 3.5);
        \node[red] at (60,4) {\small $p_4$};
        \draw[thick] (56,0) to (59.5, -3.5);
        \node[red] at (60,-4) {\small $p_3$};

        \draw [-stealth](61,0) -- (63,0);
        \node at (65,0){\small $\ldots$};
        \draw [-stealth](67,0) -- (69,0);

        \draw[thick] (70, 6) to (75, 6);
        \node[red] at (69, 6){\small $p_1$};
        \draw[thick] (75, 11) to (75, 6);
        \node[red] at (76, 12){\small $p_2$};
        \draw[thick] (75, 6) to (77, 4);
        \draw[thick] (80.5, 7.5) to (77, 4);
        \node[red] at (81, 8){\small $p_n$};
        \draw[thick] (77, 4) to (78, 2);
        \draw[thick] (78, 2) to (78, 0);
        \node[red] at (81, 1.5) {$\vdots$};
        \draw[thick] (78, 0) to (77, -2);
        \draw[thick] (77, -2) to (80.5, -5.5);
        \node[red] at (81, -6){\small $p_5$};
        \draw[thick] (77, -2) to (75, -4);
        \draw[thick] (75, -4) to (75, -9);
        \node[red] at (75, -10){\small $p_4$};
        \draw[thick] (70, -4) to (75, -4);
        \node[red] at (69, -4){\small $p_3$};
    \end{tikzpicture}
    \caption{Resolving the cross-ratios in the proof of Lemma \ref{lemma: central vertex}.}
    \label{fig: resolving cross ratios}
\end{figure}

\begin{definition}
    In the situation of Lemma \ref{lemma: central vertex}, we will refer to the $n$-valent vertex $V$ as the \textbf{central vertex} of $\mathsf{C}$. Also, we will call the connected components of $\mathsf{C}\smallsetminus \{V\}$ \textbf{leaves} of $\mathsf{C}$.  
\end{definition}

In light of the previous lemma, each leaf contains a unique marked point and all of its vertices are trivalent.

\begin{corollary}\label{cor: abstract multiplicity}
    The contribution of a curve $[h : \mathsf{C} \to \R^2]$ to
    $$
    \prod_{i=1}^n \mathrm{ev}_i^*(x_i) \cdot Z,
    $$
    where the points $p_i$ are in general position, is equal to the product of its local $\mathrm{ev}$-multiplicities at each vertex, as defined in \cite[Definition 3.18]{Goldner}.
\end{corollary}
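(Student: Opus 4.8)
The plan is to read off the contribution of a single curve $[h:\mathsf{C}\to\R^2]$ as a lattice index of the linearized evaluation map on a maximal cone, and then to factor that index vertex by vertex using the tree structure of $Z$ established in Lemma \ref{lemma: central vertex}.

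First I would apply the standard description of how a tropical cycle is cut down to dimension $0$ by pulling back points in general position (\cite{Allermann, Rau}). Since the $x_i$ are general, each curve $[h]$ appearing in the support of $\prod_{i=1}^n \mathrm{ev}_i^*(x_i)\cdot Z$ lies in the relative interior of a unique maximal cone $\sigma$ of $Z$ and is isolated there. Its contribution is then $\omega_\sigma\,[\Lambda:\mathrm{ev}(\Lambda_\sigma)]$, where $\Lambda_\sigma$ is the lattice along $\sigma$, $\Lambda$ the lattice of $(\R^2)^n$, and $\mathrm{ev}=(\mathrm{ev}_1,\dots,\mathrm{ev}_n)$ is linear on $\sigma$; the index is finite because the map is injective on $\sigma$ (the product is zero-dimensional). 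By Lemma \ref{lemma: central vertex} the weight $\omega_\sigma$ equals $1$, so the contribution is exactly the lattice index of $\mathrm{ev}|_\sigma$.

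Next I would coordinatize $\sigma$ by the position $h(V)\in\R^2$ of the central vertex together with the lattice lengths $l_e$ of the bounded edges, rooting $\mathsf{C}$ at $V$. By Lemma \ref{lemma: central vertex} every bounded edge lies on the path from $V$ to a unique marking, so $\mathrm{ev}_i(h)=h(V)+\sum_e l_e u_e$, the sum running over the edges $e$ on the path from $V$ to the vertex carrying $p_i$ and $u_e$ denoting the primitive direction of $e$. This ``caterpillar'' presentation is precisely the situation in which the vertex-local multiplicity computation of \cite[Definition 3.18]{Goldner} applies: grouping rows by marking and columns by edge, the fact that each edge serves a single marking puts the relevant matrix in block form along the rooted tree, and processing the vertices from the root outward — using the balancing condition at each trivalent $W$ to eliminate the edge pointing toward the root — exhibits $[\Lambda:\mathrm{ev}(\Lambda_\sigma)]$ as a product of per-vertex block indices. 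Each such block is by definition the local $\mathrm{ev}$-multiplicity at $W$, so that the total index is their product. This is exactly the higher-dimensional generalization of Goldner's computation whose validity was noted in \S\ref{Correspondence}.

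The step I expect to be the main obstacle is the lattice bookkeeping in this factorization: the blocks must be computed as indices of the associated sublattices rather than as real determinants, and one must use the balancing relation at each vertex to rewrite the root-ward direction in terms of the outgoing directions, verifying that the resulting per-vertex index — which must account for the weights of the adjacent ends, i.e.\ the tangency orders $\mu_{i,j}$ — agrees on the nose with \cite[Definition 3.18]{Goldner}. Once this identification is secured, the factorization, and hence the corollary, follows from the tree structure guaranteed by Lemma \ref{lemma: central vertex}.
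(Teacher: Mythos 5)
Your argument is correct and takes essentially the same route as the paper: the paper's proof simply notes that the weights of $Z$ are all $1$ by Lemma \ref{lemma: central vertex} and then cites \cite[Proposition 3.16]{Goldner} and \cite[Lemma 3.19]{Goldner}, which are exactly the lattice-index description of the contribution and the vertex-by-vertex factorization along the tree that you reconstruct by hand. The only difference is that you re-derive Goldner's statements (including the step you flag as the main obstacle) instead of invoking them directly.
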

\begin{proof}
    By the previous lemma the weights of $Z$ are all $1$, thus this immediately follows from \cite[Proposition 3.16]{Goldner} and \cite[Lemma 3.19]{Goldner}.
\end{proof}

\begin{corollary}\label{description curves}
    Let $[h : \mathsf{C} \to \R^2]$ be a curve that contributes to
    $$
    \prod_{i=1}^n \mathrm{ev}_i^*(x_i) \cdot Z,
    $$
    where the points are in general position. Then $\mathsf{C}$ has one of the two shapes in Proposition \ref{prop: type A and B}.
\end{corollary}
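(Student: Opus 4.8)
The plan is to combine the structural result of Lemma \ref{lemma: central vertex} with the multiplicity formula of Corollary \ref{cor: abstract multiplicity} and a purely numerical balancing of ends against leaves. First I would record what Lemma \ref{lemma: central vertex} already provides: every top-dimensional cone of $Z$ corresponds to a curve with an $n$-valent central vertex $V$, all remaining vertices trivalent, and with $\mathsf{C}\smallsetminus\{V\}$ decomposing into $n$ leaves, each carrying exactly one of the markings $p_1,\dots,p_n$. By Corollary \ref{cor: abstract multiplicity}, the contribution of $[h:\mathsf{C}\to\R^2]$ to $\prod_{i=1}^n \mathrm{ev}_i^*(x_i)\cdot Z$ is the product of the local $\mathrm{ev}$-multiplicities at the vertices, so $[h]$ contributes if and only if this product is nonzero; equivalently, writing the evaluation on a cone as a map from its $2n$-dimensional space of moduli (the position of $V$ together with the bounded-edge and marked-point moduli) to $(\R^2)^n$, its differential must be nondegenerate at $(x_1,\dots,x_n)$.

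The heart of the argument, and the step I expect to be the main obstacle, is a leaf-by-leaf analysis of this differential. I would organize its columns by leaf: the two columns recording the position of $V$ act on all $n$ evaluation factors, whereas every modulus internal to a given leaf $j$ moves only the image of the marking $p_j$ and leaves the images of all other markings fixed. Hence the columns coming from leaf $j$ are supported entirely in the two rows indexed by $p_j$, so at most two of them can be linearly independent. It follows that if some leaf had three or more ends, hence three or more moduli confined to those two rows, the Jacobian would have linearly dependent columns and the local multiplicity would vanish; likewise any modulus not lying on the path from $V$ to the marking $p_j$ produces an identically zero column and is excluded. This forces each contributing leaf to be a chain of at most two bounded edges from $V$ to its marking, with the remaining half-edge at each trivalent vertex of the chain being an unbounded end; thus each leaf has at most two ends, with its marking at the extremity, and (so that the surviving $2\times 2$ blocks are invertible) the two edge-directions of a two-ended leaf must be linearly independent. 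Making this rigorous amounts to unwinding \cite[Definition 3.18]{Goldner} and \cite[Lemma 3.19]{Goldner}, under which the product of Corollary \ref{cor: abstract multiplicity} is exactly the block determinant described above.

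With the bound $a_j\le 2$ on the number $a_j$ of ends of each leaf in hand, the classification is immediate. Summing over the $n$ leaves gives $\sum_{j=1}^n a_j=|\Delta|=2(n-1)$, so the total deficit is $\sum_{j=1}^n(2-a_j)=2$ with each summand in $\{0,1,2\}$. The only two ways to distribute a deficit of $2$ are a single leaf with $a_j=0$ (its marking attached directly to $V$) together with $n-1$ leaves having $a_j=2$, or exactly two leaves with $a_j=1$ together with $n-2$ leaves having $a_j=2$. Comparing with Figure \ref{fig: type A and B}, these are precisely Type A and Type B, and relabelling so that the marking sitting at $V$ in the first case is $p_1$ identifies $\mathsf{C}$ with one of the two shapes of Proposition \ref{prop: type A and B}. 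The only genuinely nontrivial input is the rank bound of the previous paragraph; once it is established, both the local shape of each leaf and the global dichotomy are forced by trivalence together with the numerology $|\Delta|=2(n-1)$.
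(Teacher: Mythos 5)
Your proposal is correct and follows essentially the same route as the paper: reduce to the leaf decomposition provided by Lemma \ref{lemma: central vertex}, rule out leaves with more than two ends (equivalently, more than two vertices) or with moduli off the path from $V$ to the marking, and then conclude from the count of markings versus ends that the ``deficit'' $\sum_j(2-a_j)=2$ forces exactly the two configurations A and B. The only difference is cosmetic: the paper phrases the exclusion step via \emph{strings} (deformations of $\mathsf{C}$ disjoint from the markings, hence not changing the evaluations), while you phrase it as linear dependence of the corresponding columns of the Jacobian of $\mathrm{ev}$ --- these are the same kernel vectors viewed from the two sides.
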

\begin{proof}
    Denote by $V$ the central vertex of $\mathsf{C}$ and let $\mathsf{L}$ be one of the leaves, which contains the marked point $p_i$. Then:
    \begin{itemize}
        \item[(1)] if $\mathsf{L}$ has no vertices, it has to be itself the marked point;
        \item[(2)] if $\mathsf{L}$ has one vertex, it must consist of one bounded edge, one end and the marked point $p_i$;
        \item[(3)] if $\mathsf{L}$ has two vertices, it must consist of two bounded edges, one marking and two ends. Moreover, the vertex with no marking attached has to be between $V$ and $p_i$. Otherwise we could find a string (i.e. an embedding $\R \to \mathsf{C}$ disjoint from any marked point), so we could deform the curve without changing its ev-multiplicity, and so it cannot contribute;
        \item[(4)]  $\mathsf{L}$ cannot have more than 2 vertices, or there would be again be a string in $\mathsf{L}$ and the contribution would be $0$.
    \end{itemize}

    \begin{figure}[h]
        \centering
        \begin{tikzpicture}[xscale=0.20,yscale=0.20]

            \draw[red, dashed] (0, 0) to (1, 5);
            \node at (0,0) {\small $\bullet$};
            \node at (-1.7,-1) {$V$};
            \node[red] at (2,6) {$p_i$};
            
            \draw[thick] (15, 0) to (13, 8);
            \draw[dashed, red] (14, 4) to (18, 5);
            \node at (15,0) {\small $\bullet$};
            \node at (13.3,-1) {$V$};
            \node[red] at (19,6) {$p_i$};

            \draw[thick] (30, 0) to (28, 4);
            \draw[thick] (28, 4) to (25, 7);
            \draw[thick] (28, 4) to (30, 8);
            \draw[dashed, red] (29, 6) to (32, 5);
            \node[red] at (33,4) {$p_i$};
            \node at (30,0) {\small $\bullet$};
            \node at (28.3,-1) {$V$};

            \draw[thick] (45, 0) to (44,3);
            \draw[thick] (44, 3) to (45, 6);
            \draw[thick] (44,3) to (41, 4);
            \draw[thick] (45, 6) to (44, 9);
            \draw[thick] (45,6) to (47, 8);
            \node at (45,0) {\small $\bullet$};
            \node at (43.3,-1) {$V$};

            \draw[thick, blue](44,3) to (43.67, 4);
            \draw[thick, blue] (45, 6) to (44, 5);
            \draw[thick, blue] (43.67, 4) to (44, 5);
            \draw[thick, blue] (43.67, 4) to (40.67, 5);
            \draw[thick, blue] (43, 8) to (44, 5);

            \draw[red, dashed] (47,7) to (50,6);
            \node[red] at (51,5) {$p_i$};
            
        \end{tikzpicture}
        \caption{The pictures for the situations (1), (2), (3) and (4) in the proof.}
        \label{fig:enter-label}
    \end{figure}
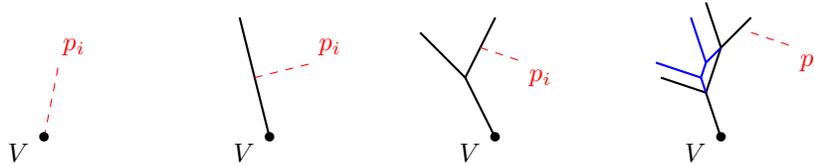
    If $L_i$ is the number of leaves with $i$ vertices, counting the number of markings we obtain $L_0 + L_1+L_2 = n$, whereas counting the number of ends, $L_1+2L_2= |\Delta| = 2(n-1)$. In particular, $2L_0+L_1=2$, which leaves only two possibilities, that correspond to the description of A and B.
\end{proof}

\subsection{Proof of Theorem \ref{thm: main}}




We now assume $X= \mathcal{H}_a$ and $\Sigma$ is the fan in \ref{notation: rays}. Corollary \ref{description curves} motivates the following definition.
\begin{definition}
    For a curve $[h: \mathsf{C} \to \R^2]$ contributing to \eqref{eqn: our count}, let $V$ be its central vertex and define: 
    \begin{align*}
        &\alpha([h])=\text{ number of leaves from } V \text{ having ends whose primitive vectors are } n_1 \text{ and } n_2,  \\
        &\beta([h])=\text{ number of leaves from } V \text{ having ends whose primitive vectors are } n_1 \text{ and } n_3,\\
        &\gamma([h])=\text{ number of leaves from } V \text{ having ends whose primitive vectors are } n_2 \text{ and } n_3,\\
        &\delta([h])=\text{ number of leaves from } V \text{ having ends whose primitive vectors are } n_3 \text{ and } n_4,\\
        &\chi([h])=\text{ number of leaves from } V \text{ having ends whose primitive vectors are } n_4 \text{ and } n_1,\\
        &\epsilon_1([h])= \text{ number of leaves from } V \text{ having one end whose primitive vector is } n_1,\\
        &\epsilon_2([h])= \text{ number of leaves from } V \text{ having one end whose primitive vector is } n_2,\\
        &\epsilon_3([h])= \text{ number of leaves from } V \text{ having one end whose primitive vector is } n_3,\\
        &\epsilon_4([h])= \text{ number of leaves from } V \text{ having one end whose primitive vector is } n_4.
    \end{align*}
\end{definition}

\begin{remark}\label{rmk: correspondence points-fork}
    Note that these are the possibilities that can occur. More precisely, let 
    \begin{align*}
    &\sigma_1= \{ x<0, ax+y>0\},\\
    &\sigma_2= \{ x>0, y>0\},\\
    &\sigma_3=\{ x>0, y<0\},\\
    &\sigma_4=\{ x<0, ax+y<0\}
    \end{align*}
    be the interiors of the maximal cones of the fan $\Sigma$ in Notation \ref{notation: rays}. Then leaves counted in $\alpha([h])$ corresponds to points $x_i$ in $\sigma_1$, leaves in $\beta([h])$ corresponds to points $x_i$ in $\sigma_1$ or $\sigma_2$, leaves in $\gamma([h])$ corresponds to points in $\sigma_2$, leaves in $\delta([h])$ corresponds to points in $\sigma_3$ and leaves in $\chi([h])$ corresponds to points in $\sigma_4$.
\end{remark}

\begin{lemma}
    For a curve $[h: \mathsf{C} \to \R^2]$ contributing to \eqref{eqn: our count}, the vector
    $$
    (\alpha,\beta,\gamma,\delta,\chi,\epsilon_1,\epsilon_2, \epsilon_3, \epsilon_4)=(\alpha([h]),\beta([h]),\gamma([h]), \delta([h]), \chi([h]),\epsilon_1([h]), \epsilon_2([h]), \epsilon_3([h]), \epsilon_4([h])).
    $$ 
    satisfies the following system
    \begin{equation}\label{eqn: our count bis}
    \left\lbrace \begin{array}{ll}
        \alpha + \beta +\chi &= |\mu_1|- \epsilon_1\\
        \alpha + \gamma &= |\mu_2|- \epsilon_2\\
        \beta + \gamma + \delta &= |\mu_3|- \epsilon_3\\
        \chi + \delta &= |\mu_4|- \epsilon_4
    \end{array}\right.
    \end{equation}
 \end{lemma}
 \begin{proof}
     The proof is clear.
 \end{proof}
In particular
$$
\alpha + \beta + \gamma + \delta + \chi =n-1- \delta_{B}^{\text{type of } [h]}
$$
where $\delta_{B}^{\text{type of } [h]}$ is $1$ when the type of $[h]$ is $B$ and $0$ when it is $A$. Solving the system in $\alpha$, we obtain
\begin{equation}\label{eqn: main system}
    \left\lbrace \begin{array}{ll}
        \beta &= n-1-\delta_{B}^{\text{type of } [h]}-|\mu_4|-|\mu_2|+ \epsilon_2 +\epsilon_4\\
        \gamma &= |\mu_2|- \epsilon_2 - \alpha\\
        \delta &= |\mu_3| + |\mu_4|- (n-1- \delta_{B}^{\text{type of } [h]})- \epsilon_3 -\epsilon_4+\alpha\\
        \chi &= n-1-\delta_{B}^{\text{type of } [h]} -|\mu_3| +\epsilon_3-\alpha
    \end{array}\right.
    \end{equation}
\begin{remark}
    If system \eqref{eqn: main system} does not have an admissible solution, then  $\tropTH =0$. In particular, if 
    $$
    |\mu_2|+|\mu_4|>n-1
    $$
    then $\beta<0$ (note that $\epsilon_2([h])$ and $\epsilon_4([h])$ cannot both be $1$ for $[h]$ contributing to \eqref{eqn: our count} being the $x_i$ in general position) and $\tropTH =0$. Similarly, if
    $$
    |\mu_3|>n-1
    $$
    then $\tropTH =0$. Finally, there is an isomorphism of $\Sigma$ preserving $n_2$ and $n_4$ and switching $n_1$ and $n_3$, so also
    $$
    |\mu_1|>n-1
    $$
    imlpies $\tropTH =0$
\end{remark} 

The next lemma computes the multiplicities of any curve to \ref{eqn: our count} for Hirzebruch surfaces.

\begin{lemma}\label{lem: contribution of curve}
    The contribution of a curve $[h : \mathsf{C} \to \R^2]$ to \ref{eqn: our count} is 
    $$
    \left(\prod_{i = 1}^4\prod_{j = 1}^{|\mu_{ij}|} \mu_{i,j}\right)a^{n-1-|\mu_2|-|\mu_4|}
    $$
\end{lemma}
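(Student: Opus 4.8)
The plan is to compute the contribution of $[h]$ as the product of its local $\mathrm{ev}$-multiplicities over the vertices of $\mathsf{C}$, as licensed by Corollary \ref{cor: abstract multiplicity}, and to organize this product leaf by leaf using the classification in Corollary \ref{description curves}. By Proposition \ref{prop: type A and B}, $\mathsf{C}$ consists of a central vertex $V$ together with leaves that are either two-ended (two bounded edges, two ends, one marking) or, in type B, one-ended (one bounded edge, one end, one marking). First I would record that along a two-ended leaf the balancing condition forces the intermediate bounded edge $W_1W_2$ to be parallel to the end attached at the marking vertex $W_2$: there the two non-contracted edges at $W_2$ are anti-parallel, so the marking sits on a straight segment and the local $\mathrm{ev}$-multiplicity at $W_2$ is $1$, while all the branching takes place at the trivalent fork $W_1$.

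Next I would compute the fork multiplicities. For a two-ended leaf with ends of primitive directions $n_i, n_j$ and weights $w, w'$ (so $w, w'$ are among the $\mu_{i,j}$), the local $\mathrm{ev}$-multiplicity at the fork $W_1$ is the Mikhalkin weight $|\det(w\, n_i,\, w'\, n_j)| = w\, w'\, |\det(n_i, n_j)|$. In type B the two one-ended leaves carry no internal fork; their edges emanate straight from $V$, and the two point conditions pin down $h(V)$ as the intersection of the corresponding lines, so the local $\mathrm{ev}$-multiplicity at $V$ is $w_1 w_2\, |\det(n_{j_1}, n_{j_2})|$, where $n_{j_1}, n_{j_2}$ are the directions of the two single ends and $w_1, w_2$ their weights (in type A the central marking $p_1$ instead fixes $h(V)$ and contributes $1$). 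Since every end of $\mathsf{C}$ appears in exactly one fork, the product of all fork multiplicities equals $\left(\prod_{i=1}^4 \prod_{j=1}^{|\mu_i|} \mu_{i,j}\right)$ times the product of the determinants $|\det(n_i, n_j)|$ taken over the forks.

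It then remains to evaluate that determinant product. Using the vectors in Notation \ref{notation: rays} one computes $|\det(n_1, n_3)| = a$, while $|\det(n_1, n_2)| = |\det(n_2, n_3)| = |\det(n_3, n_4)| = |\det(n_4, n_1)| = 1$ and $|\det(n_2, n_4)| = 0$. Hence only the forks of type $\beta$ (ends $n_1, n_3$) contribute a factor $a$; the vanishing of $|\det(n_2,n_4)|$ rules out the anti-parallel configuration (consistent with the general position of the $x_i$ in Remark \ref{rmk: correspondence points-fork}); and the total power of $a$ equals $\beta([h])$ in type A, and $\beta([h])$ in type B augmented by one further $a$ exactly when the two single ends are $n_1$ and $n_3$. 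Feeding this into the solved system \eqref{eqn: main system}, where $\beta = n-1-\delta_B^{\text{type of }[h]}-|\mu_2|-|\mu_4|+\epsilon_2+\epsilon_4$ and exactly two of the $\epsilon_j$ equal $1$ in type B, I would check over the admissible pairs of single ends that $\epsilon_2+\epsilon_4$ plus the indicator of $\{n_1,n_3\}$ always equals $1$; together with the type-A case ($\delta_B=0$, all $\epsilon_j=0$, exponent $\beta=n-1-|\mu_2|-|\mu_4|$) this shows the exponent of $a$ is uniformly $n-1-|\mu_2|-|\mu_4|$, yielding the claimed contribution.

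The hardest part will be the precise identification of \cite[Definition 3.18]{Goldner} with the Mikhalkin fork weight and with the multiplicity $1$ at the straight marking vertices, together with the type-B bookkeeping: one must verify that $h(V)$ is genuinely cut out with lattice index $w_1 w_2\,|\det(n_{j_1}, n_{j_2})|$, and that the pair $\{n_2, n_4\}$ of single ends indeed cannot occur, so that the single formula for the exponent of $a$ remains valid uniformly across types A and B.
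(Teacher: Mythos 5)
Your argument is correct and follows essentially the same route as the paper's proof: reduce to the product of local $\mathrm{ev}$-multiplicities via Corollary \ref{cor: abstract multiplicity}, compute each fork multiplicity as $\lambda\eta\,|\det(n_i,n_j)|$ together with the type-B central-vertex multiplicity coming from the two single-end (fixed) leaves, and then check via the solved system \eqref{eqn: main system} that the exponent of $a$ is $n-1-|\mu_2|-|\mu_4|$ in every case. The only cosmetic difference is that you handle the type-B central vertex with the single formula $w_1w_2\,|\det(n_{j_1},n_{j_2})|$, whereas the paper splits into the cases $\epsilon_1+\epsilon_3=1$ and $\epsilon_1+\epsilon_3=2$ and excludes the pair $\{n_2,n_4\}$ by general position of the points rather than by the vanishing of the determinant.
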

\begin{proof}
    By Corollary \ref{cor: abstract multiplicity}, we need to compute the product of the $\mathrm{ev}$-multiplicity at each vertex.\\
    For a leaf $\mathsf{L}_i$ with two vertices, marking $x_i$, and weighted vectors associated to its end $\lambda_i \cdot n_{j_i}$, $\eta_i \cdot n_{k_i}$, the multiplicity at the vertex of $\mathsf{L}_i$ that is not adjacent to $x_i$  is precisely $\lambda_i \eta_i |\det(n_{j_i}, n_{k_i})|$. Note that for $j<k$,
        $$
        \det (n_j, n_k) = \left\lbrace 
        \begin{array}{ll}
            a & \text{ if }  j=1, k =3 \\
            0 & \text{ if }  j=2, k =4\\
            1 & \text{ otherwise}
        \end{array}
        \right. 
        $$
    \begin{itemize}
        \item If the curve is of type $A$, $V$ will have local multiplicity $1$ by \cite[definition 3.18]{Goldner}, and as $i$ goes from $2$ to $n$, we obtain a factor of $a$ for each leaf counted in $\beta ([h])= n-1-|\mu_2|-|\mu_4|$ (see Equation \ref{eqn: main system}), and the numbers $\lambda_i$ and $\eta_i$ go through all the elements of each partition, so we obtain the number in the statement of the lemma.
        \item If the curve is of type $B$, we can assume without lost of generality that the leaves with one end are $\mathsf{L}_1$, $\mathsf{L}_2$, with markings $x_1$, $x_2$ and that the end of $\mathsf{L}_i$ has weight $\lambda_i$ and primitive vector $n_{j_i}$, for $i = 1,2$.\\
        Then the leaves $\mathsf{L}_1$, $\mathsf{L}_2$ are precisely the fixed components of $V$, in the language of \cite[Definition 3.18]{Goldner}. We divide into cases:
        \begin{itemize}
            \item If $\epsilon_1([h]) + \epsilon_3([h])=1$, the $\mathrm{ev}$-multiplicity of $V$ is $\lambda_1\lambda_2$ and the local multiplicity for the rest of the vertices from the leaves $\mathsf{L}_3, \ldots , \mathsf{L}_n$ are counted as in the case $A$, so the multiplicity of the curve would be
            $$
            \left(\prod_{i = 1}^4\prod_{j = 1}^{|\mu_{ij}|} \mu_{i,j}\right)a^{\beta([h])}.
            $$
            Note that, by \ref{eqn: main system}, in this case $\beta([h]) = n-1-|\mu_4|-|\mu2|$ because $\epsilon_2([h]) + \epsilon_4([h]) = 1$.
            \item If $\epsilon_1([h]) + \epsilon_3([h])=2 $, the $\mathrm{ev}$-multiplicity of $V$ is  $a\lambda_1\lambda_2$ and the rest of local multiplicities are computed in the same way giving the number
            $$
            \left(\prod_{i = 1}^4\prod_{j = 1}^{|\mu_{ij}|} \mu_{i,j}\right)a^{\beta([h])+1}.
            $$
            In this case, by looking at \ref{eqn: main system}, $\beta([h])= n-2-|\mu_2| - |\mu_4|$.
        \end{itemize}
\end{itemize}
\end{proof}

Next, we explain which curves appear in \ref{eqn: our count}. We distinguish two cases.





\subsubsection{Case \texorpdfstring{$|\mu_3|+|\mu_4| \geq n-1$}{}}\label{sec: first case}

In this case, the following
\begin{equation}\label{eqn: sol1}
    \left\lbrace \begin{array}{ll}
        \bar{\alpha} &=0, \\
        \bar{\delta}_{B}^{\text{type of }h} &=0 \\
        \bar{\epsilon}_i &=0 \text{ for } i=1, \ldots, 4, \\
        \bar{\beta} &= n-1-|\mu_2|-|\mu_4|\\
        \bar{\gamma} &= |\mu_2|\\
        \bar{\delta} &= |\mu_3| + |\mu_4|- (n-1)\\
        \bar{\chi} &= n-1 -|\mu_3|
    \end{array}\right.
\end{equation}
is a solution of the system \eqref{eqn: main system}.

We put the points $x_1, \ldots , x_n$ in the plane $\R^2$ in general position and in such a way that: $x_1$ at $(0,0)$, and there are exactly $\bar{\alpha}$ points in $\sigma_1$, $\bar{\beta}+\bar{\gamma}$ in $\sigma_2$, $\bar{\delta}$ in $\{ ax+y>0, y<0\} \subseteq \sigma_3$ and $\bar{\chi}$ in $\{x<0, y<0 \} \subseteq \sigma_4$.

It is then clear that we can form 
$$
\prod_{i=1}^4 |\mu_i|!\binom{\bar{\beta}+\bar{\gamma}}{\bar{\beta}}
=
\prod_{i=1}^4 |\mu_i|!\binom{n-1-|\mu_4|}{|\mu_2|}
$$
curves $[h: \mathsf{C} \to \R^2]$ contributing to the intersection \eqref{eqn: our count} with domain $\mathsf{C}$ of type $A$ and $(\alpha ([h]), \ldots \chi([h]))=(\bar{\alpha}, \ldots ,\bar{\chi})$ as in Remark \ref{rmk: correspondence points-fork}. Here the factorial terms are counting the different ways of labelling the ends of $\mathsf{C}$. 
We will prove in \S\ref{sec:exclusion of further contributions} that these are all the contributing curves in this case.

\subsubsection{Case \texorpdfstring{$|\mu_3|+|\mu_4| < n-1$}{}}\label{sec: second case}

This case is more complicated. In this case, the following 
\begin{equation}\label{eqn: sol2}
    \left\lbrace \begin{array}{ll}
        \bar{\alpha} &=n-1-|\mu_3|-|\mu_4|, \\
        \bar{\delta}_{B}^{\text{type of }h} &=0 \\
        \bar{\epsilon}_i &=0 \text{ for } i=1, \ldots, 4, \\
        \bar{\beta} &= n-1-|\mu_4|-|\mu_2|\\
        \bar{\gamma} &= n-1-|\mu_1|\\
        \bar{\delta} &= 0\\
        \bar{\chi} &= |\mu_4|
    \end{array}\right.
\end{equation}
is a solution of \eqref{eqn: main system}.

As before, we put the points $x_1, \ldots , x_n$ in the plane $\R^2$ in general position and in such a way that: $x_1$ is at $(0,0)$, and there are exactly $\bar{\alpha}$ points in $\sigma_1$, $\bar{\beta}+\bar{\gamma}$ in $\sigma_2$, $\bar{\delta}$ in $\{ ax+y>0, y<0\} \subseteq \sigma_3$ and $\bar{\chi}$ in $\{x<0, y<0 \} \subseteq \sigma_4$.

We can form 
\begin{equation}\label{eqn: contribution 2.1}
\prod_{i=1}^4 |\mu_i|!\binom{\bar{\beta}+\bar{\gamma}}{\bar{\beta}}
=
\prod_{i=1}^4 |\mu_i|!\binom{|\mu_3|}{n-1-|\mu_4|-|\mu_2|}
\end{equation}
curves $[h: \mathsf{C} \to \R^2]$ contributing to the intersection \eqref{eqn: our count} with domain $\mathsf{C}$ of type $A$ as in the previous case. However, there also are some curves $[h: \mathsf{C} \to \R^2]$ of type $B$ which we now describe. The vertex $V$ is mapped to
$$
h(V) \in \{x >0, ax+y=0 \},
$$
and 
\begin{align*}
   (\epsilon_1 ([h]), \epsilon_2 ([h]), \epsilon_3([h]), \epsilon_4([h]), \alpha([h]), \beta([h]), \gamma([h]), \delta([h]), \chi([h]))=   (1, 1 , 0, 0, \bar{\alpha}-1,  \bar{\beta},  \bar{\gamma},  \bar{\delta},  \bar{\chi}).
\end{align*}
Note that the marking $x_1$ is reached from $V$ by a single bounded edge in the direction $n_1$.
There are 
\begin{align}\label{eqn: contribution 2.2}
\begin{split}
    &\prod_{i=1}^4 |\mu_i|!\sum_{k=1}^{\bar{\beta}} \binom{\bar{\alpha}+k-1}{\bar{\alpha}-1} \binom{\bar{\beta}+\bar{\gamma}-k}{\bar{\beta}-k}\\
    =&\prod_{i=1}^4 |\mu_i|! \sum_{k=1}^{n-1-|\mu_2|-|\mu_4|} \binom{n-2-|\mu_3|-|\mu_4|+k}{n-2-|\mu_3|-|\mu_4|} \binom{|\mu_3|-k}{n-1-|\mu_2|-|\mu_4|-k}
\end{split}
\end{align}
of such curves.

\begin{lemma}
    The curves in \eqref{eqn: contribution 2.1} and \eqref{eqn: contribution 2.2} are a total of 
    $$
    \prod_{i=1}^4 |\mu_i|!\binom{n-1-|\mu_4|}{|\mu_2|}
    $$
    curves.
\end{lemma}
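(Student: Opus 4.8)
The plan is to strip away the common factor and reduce the claim to a single binomial identity. Every curve counted in \eqref{eqn: contribution 2.1} and \eqref{eqn: contribution 2.2} carries the same prefactor $\prod_{i=1}^4|\mu_i|!$, so after cancelling it I only need to compare binomial sums. Writing $A=\bar\alpha$, $B=\bar\beta$, $C=\bar\gamma$ as in \eqref{eqn: sol2}, the type $A$ contribution \eqref{eqn: contribution 2.1} is $\binom{B+C}{B}$, while the type $B$ contribution \eqref{eqn: contribution 2.2} is $\sum_{k=1}^{B}\binom{A+k-1}{A-1}\binom{B+C-k}{B-k}$. The first observation I would record is that $\binom{B+C}{B}$ is exactly the $k=0$ term of that sum, since $\binom{A-1}{A-1}=1$. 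Hence the two families together number
$$
\prod_{i=1}^4|\mu_i|!\;\sum_{k=0}^{B}\binom{A+k-1}{A-1}\binom{B+C-k}{B-k},
$$
and the goal becomes evaluating this single sum.

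Next I would use the dimensional constraint \eqref{dim constraint}, which in the form $|\mu_1|+|\mu_2|+|\mu_3|+|\mu_4|=2(n-1)$ converts the parameters of \eqref{eqn: sol2} into the shape of the target. A direct computation gives $B+C=2(n-1)-|\mu_1|-|\mu_2|-|\mu_4|=|\mu_3|$, $A+C=2(n-1)-|\mu_1|-|\mu_3|-|\mu_4|=|\mu_2|$, and therefore $A+B+C=|\mu_2|+B=n-1-|\mu_4|$. Consequently the claimed total $\prod_{i=1}^4|\mu_i|!\binom{n-1-|\mu_4|}{|\mu_2|}$ equals $\prod_{i=1}^4|\mu_i|!\binom{A+B+C}{A+C}=\prod_{i=1}^4|\mu_i|!\binom{A+B+C}{B}$, so it remains to prove the purely combinatorial statement
$$
\sum_{k=0}^{B}\binom{A+k-1}{A-1}\binom{B+C-k}{B-k}=\binom{A+B+C}{B}.
$$

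To prove this identity I would pass to generating functions. Rewriting $\binom{A+k-1}{A-1}=\binom{A+k-1}{k}=[x^{k}](1-x)^{-A}$ and $\binom{B+C-k}{B-k}=\binom{B+C-k}{C}=[x^{B-k}](1-x)^{-(C+1)}$, the left-hand side is the coefficient of $x^{B}$ in the Cauchy product $(1-x)^{-A}(1-x)^{-(C+1)}=(1-x)^{-(A+C+1)}$, which is $\binom{A+B+C}{B}$; the expansion is legitimate because $A=\bar\alpha=n-1-|\mu_3|-|\mu_4|\ge 1$ in the regime $|\mu_3|+|\mu_4|<n-1$ treated here (equivalently one may cite the Vandermonde convolution directly). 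This closes the argument. I do not expect a genuine obstacle: the only delicate point is the bookkeeping that folds the type $A$ count into the $k=0$ term and that substitutes \eqref{eqn: sol2} correctly via \eqref{dim constraint}, after which the evaluation is the standard negative-binomial convolution.
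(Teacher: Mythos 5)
Your proof is correct and follows essentially the same route as the paper's: both arguments begin by absorbing the type $A$ count \eqref{eqn: contribution 2.1} as the $k=0$ term of the sum \eqref{eqn: contribution 2.2}, and then evaluate the resulting convolution by the Vandermonde identity. The only difference is presentational: the paper performs the reduction via two upper-negation identities and an extension of the summation range before invoking Vandermonde with negative arguments, whereas you reach the same convolution $[x^{B}](1-x)^{-A}(1-x)^{-(C+1)}=\binom{A+B+C}{B}$ directly through the substitutions $A+B+C=n-1-|\mu_4|$, $A+C=|\mu_2|$, $B+C=|\mu_3|$, which avoids the sign bookkeeping.
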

\begin{proof}
    We will use the following two well-known combinatorial identities 
    \begin{equation}\label{eqn: combI}
    \binom{x}{y}=(-1)^y \binom{y-x-1}{y} \ 
    \end{equation}
    and
    \begin{equation}\label{eqn: combII}
    \binom{x}{y}=(-1)^{x-y} \binom{-y-1}{x-y} 
    \end{equation}
    valid for $x \in \Z_{>0}$ and $y \in \Z_{\geq 0}$, and Vandermonde identity
    \begin{equation}\label{eqn: Vandermonde}
    \sum_{k=0}^N \binom{x}{k} \binom{y}{N-k} =\binom{x+y}{N}
    \end{equation}
    valid for $ N \in \Z_{\geq 0}$ and $x,y \in \C$.

    We start with noticing that the $k=0$ term in the sum \eqref{eqn: contribution 2.2} is exactly the binomial coefficient in \eqref{eqn: contribution 2.1}. Therefore, the sum of the two contributions is (up to taking the product with $\prod_{i=1}^4 |\mu_i|!$)
    \begin{align*}
        &\sum_{k=0}^{n-1-|\mu_2|-|\mu_4|} \binom{n-2-|\mu_3|-|\mu_4|+k}{n-2-|\mu_3|-|\mu_4|} \binom{|\mu_3|-k}{n-1-|\mu_1|} \\
        =& (-1)^{n-1-|\mu_3|-|\mu_4|} \sum_{k=0}^{n-1-|\mu_2|-|\mu_4|} \binom{|\mu_3|+|\mu_4|-(n-1)}{k} \binom{|\mu_1|-n}{|\mu_1|+|\mu_3|-(n-1)-k} \\
        =& (-1)^{n-1-|\mu_3|-|\mu_4|} \sum_{k=-\infty }^{\infty} \binom{|\mu_3|+|\mu_4|-(n-1)}{k} \binom{|\mu_1|-n}{|\mu_1|+|\mu_3|-(n-1)-k} \\
        =& (-1)^{n-1-|\mu_3|-|\mu_4|} \binom{-|\mu_2|-1}{|\mu_1|+|\mu_3|-(n-1)} \\
        =& \binom{n-1-|\mu_4|}{n-1-|\mu_4|-|\mu_2|}
    \end{align*}
    where in the first equality we used 
    \eqref{eqn: combI}, in the second equality the fact that $|\mu_1|+|\mu_3|-(n-1)-k<0$ for $k > n-1-|\mu_2|-|\mu_4|$, in the third equality \eqref{eqn: Vandermonde} and in the last equality \eqref{eqn: combII}. This concludes the proof.
\end{proof}

We will prove in \S\ref{sec:exclusion of further contributions} that these are all the contributing curves in this case.

\subsubsection{Exclusion of further contributions}\label{sec:exclusion of further contributions}

We have to prove that the curves listed above are all the curves contributing to the intersection \ref{eqn: our count}. 

We start with adopting a unifying perspective.

\begin{remark}
    The solutions \eqref{eqn: sol1} and \eqref{eqn: sol2} are determined as the unique solution of the system \eqref{eqn: main system} where we set $\delta_{B}^{\text{type of } [h]}=0$, $\epsilon_i=0$ and ask for $\alpha$ to be the minimum possible.
\end{remark}

Call $(\bar{\alpha}, \bar{\beta},\bar{\gamma},\bar{\delta},\bar{\chi})$ such solution (so either as in \eqref{eqn: sol1} or as in \eqref{eqn: sol2})
\begin{remark}\label{rmk: easy sol -> hard sol}
    Suppose $(\alpha, \beta,\gamma,\delta,\chi)$ is a solution of \eqref{eqn: main system}, with $\delta_{B}^{\text{type of } [h]}=1$ and $\epsilon_i=1$ for exactly two indices $i$ and $\epsilon_i=0$ otherwise. Then $\alpha \geq \bar{\alpha}$ unless 
    $$
    (\epsilon_1, \epsilon_2, \epsilon_3, \epsilon_4, \alpha, \beta, \gamma, \delta, \chi)=(1,1,0,0, \bar{\alpha}+1, \bar{\beta}, \bar{\gamma}, \bar{\delta}, \bar{\chi}).
    $$
\end{remark}
\begin{proof}
    Suppose for example $\epsilon_1=\epsilon_2=0$ and $\epsilon_3=\epsilon_4=1$. Then $(\alpha, \beta, \gamma+1, \delta, \chi)$ is a solution of the system \eqref{eqn: main system} with $\delta_{B}^{\text{type of } [h]}=0$ and $\epsilon_i=0$ for all $i$. Therefore $\alpha \geq \bar{\alpha}$. The other cases are treated similarly. The case $\epsilon_2=\epsilon_4=1$ and $\epsilon_1=\epsilon_3=0$ is not possible because the points $x_i$ are in general positions and in particular there are no two of them in the same line $x=\mathrm{constant}$.
\end{proof}

We first exclude other contributions $[h: \mathsf{C} \to \R^2]$ with $\mathsf{C}$ of type $A$. For such $[h]$ we would have:
\begin{enumerate}
    \item[$\bullet$] if $h(V) \in \sigma_1$, then $\alpha([h]) \leq \bar{\alpha}$ and $\delta([h]) >\bar{\delta}$,
    \item[$\bullet$] if $h(V) \in \sigma_2$, then $\gamma([h])>\bar{\gamma}$,
    \item[$\bullet$] if $h(V) \in \sigma_3 \cap \{ ax+y>0\}$, then $\gamma([h])> \bar{\gamma}$, 
    \item[$\bullet$] if $h(V) \in \{ ax+y<0\}$, then $\alpha([h])+\beta([h])+\gamma([h]) >\bar{\alpha}+ \bar{\beta}+\bar{\gamma}$,
\end{enumerate}
each of which is in contradiction with  system \eqref{eqn: main system} and the choice of $\bar{\alpha}$. 

Assume now we are not in the exceptional situation of Remark \ref{rmk: easy sol -> hard sol}. Under this assumption, we now prove that there are no more contributions of type B. For such a $[h: \mathsf{C} \to \R^2]$ we would have one of the following contradictions:
\begin{enumerate}
    \item[$\bullet$] if $h(V) \in \{  y>0 \} \cup \{ x>0, y=0 \}$, then $\alpha([h])+ \beta([h])+ \gamma([h]) \leq \bar{\alpha}+\bar{\beta}+\bar{\gamma}-1$ and therefore by \eqref{eqn: main system} it must be $\epsilon_4([h])=0$, which would then imply $\alpha([h])+ \beta([h])+ \gamma([h]) = \bar{\alpha}+\bar{\beta}+\bar{\gamma}-2$ in contradiction with \eqref{eqn: main system},
    \item[$\bullet$] if $h(V) \in \{ x>0, ax+y>0 \}$, then $\chi([h]) > \bar{\chi}$ in contradiction with \eqref{eqn: main system} and our assumption on $\bar{\alpha}$,
    \item[$\bullet$] if $h(V) \in \{ x>0, ax+y=0 \}$, then $\epsilon_1([h])=1$ and $\epsilon_4([h])=0$ for the choice of the points $x_i$. If $\epsilon_3=0$, then system \eqref{eqn: main system} prescribes $\chi([h]) < \bar{\chi}$ but $\chi([h]) \geq \bar{\chi}$,
    \item[$\bullet$] if $h(V) \in \{ x \geq 0, ax+y <0 \}$ we have $\alpha([h])+\beta([h])+\gamma([h]) \geq \bar{\alpha}+ \bar{\beta}+ \bar{\gamma}$ and so, by \eqref{eqn: main system}, this is an equality and $\epsilon_4([h])=1$. However, $\epsilon_4([h])=0$ by the position of the points $x_i$,
    \item[$\bullet$] if $h(V) \in \{x<0, y<0\}$ then $\delta([h])+ \chi([h]) < \bar{\delta}+\bar{\chi}$ and so by \eqref{eqn: main system}, $\epsilon_4([h])=1$. It follows that $\epsilon_2([h])=0$ and so $\delta([h])+ \chi([h]) \leq \bar{\delta}+\bar{\chi}-2$, which is in contradiction with \eqref{eqn: main system},
    \item[$\bullet$] if $h(V) \in \{x<0, y=0\}$ then $\epsilon_3([h])=1$. If $\epsilon_4([h])=0$, then $\alpha([h])< \bar{\alpha}$ which is not possible for the choice of $\bar{\alpha}$ and our assumptions; if instead $\epsilon_4([h])=1$, then $\chi([h])< \bar{\chi}$ and so, by \eqref{eqn: main system}, $\alpha([h])>\bar{\alpha}$ which is in contradiction with the fact that there are no points $x_i$ which are reachable from $h(V)$ via leaves counted in $\delta([h])$ and are not reachable with the same type of leaves from $(0,0)$.
\end{enumerate}

Finally, we deal with curves of type B in the exceptional case of Remark \ref{rmk: easy sol -> hard sol}. In this case:
\begin{enumerate}
    \item[$\bullet$] $h(V)$ cannot be on $\{ ax+y \leq 0\ , x \geq 0 \} \smallsetminus \{(0,0)\}$ otherwise $\epsilon_1([h])=0$,
    \item[$\bullet$] $h(V) \in \{x <0, ax+y>0 \}$, then $\alpha([h]) \leq \bar{\alpha}-2$ which is not the case,
    \item[$\bullet$] $h(V)$ cannot be on $\{y>0 , x=0 \}$ otherwise $\epsilon_2([h])=0$,
    \item[$\bullet$] if $h(V) \in \{ x > 0 , ax+y >0 \} $, then $\chi([h])>\bar{\chi}$ which is also not the case,
    \item[$\bullet$] if $h(V) \in \{x>0, ax+y=0 \}$ then the curves of type B in \S\ref{sec: second case} appear,
    \item[$\bullet$] if $h(V) \in \{x>0, ax+y<0 \}$ then $\alpha([h])+\beta([h])+\gamma([h])\geq \bar{\alpha}+ \bar{\beta} +\bar{\gamma}$ which is not happening.
    
\end{enumerate}

This proves that the curves listed in \S\ref{sec: first case} and \S\ref{sec: second case} are all the curves contributing to the intersection \eqref{eqn: our count} and concludes the proof of Theorem \ref{thm: main}.

\section{Absence of rational curves interpolating \texorpdfstring{$n$}{} points on Hirzebruch surfaces}\label{sec: absence of curves}

Let $a \geq 2$ and assume that $\mu_{i,j}=1$ for all $i,j$. Let also $\beta \in H_2(X,\Z)$ be the unique curve class with $\beta . D_i=|\mu_i|$ for $i=1,2,3,4$. As in Remark \ref{rmk: absence of curves}, we have
$$
\mathsf{vTev}^{\mathcal{H}_a}_{\mathsf{\Gamma}}=0.
$$
By Lemma \ref{lemma: irreducibility of moduli spaces}, this is equivalent to say that the map \eqref{tau} is not dominant. We aim to now explain this phenomenon geometrically. 

By the proof of Theorem \ref{thm: correspondence}, it is enough to show that the restriction 
$$
\overset{\circ}{\tau}: \mathcal{M}_{\mathsf{\Gamma}}(\mathcal{H}_a) \to \mathcal{M}_{g,n} \times \mathcal{H}_a
$$ 
of \eqref{tau} is not dominant. By contradiction, suppose that $\overset{\circ}{\tau}$ is dominant. 

Let $\rho: \mathcal{H}_a=\P(\mathcal{O} \oplus \mathcal{O}(a)) \to \P^1$ be the projective bundle map and let
\begin{equation}\label{eqn: universal exact sequence}
0 \to \mathcal{O}_{\mathcal{H}_a}(-1) \to \rho^*( \mathcal{O}_{\P^1} \oplus \mathcal{O}_{\P^1}(a)) \to Q \to 0
\end{equation}
be the associated universal exact sequence.

\begin{lemma}\label{lemma: aux geometry}
    We have $2 |\mu_1| \geq n-1$.
\end{lemma}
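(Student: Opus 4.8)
The plan is to deduce the inequality from the assumed existence of enough maps by projecting everything to the base $\P^1$ of the bundle $\rho : \mathcal{H}_a \to \P^1$ and performing a dimension count on the projected data.

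First I would record that $D_1$ is a fiber of $\rho$ (in the fan of Notation \ref{notation: rays} one checks $D_1^2 = 0$ and $D_1 \equiv D_3$, the two fiber classes), so that $\rho^*\mathcal{O}_{\P^1}(1) = \mathcal{O}_{\mathcal{H}_a}(D_1)$. Consequently, for any log stable map $f : C \to \mathcal{H}_a$ of class $\beta$ with smooth genus-$0$ domain, the composite $g := \rho \circ \underline{f} : C \to \P^1$ formed from the underlying ordinary map $\underline{f}$ is a morphism of degree
$$
\deg g = \deg\big(\underline{f}^*\rho^*\mathcal{O}_{\P^1}(1)\big) = \beta \cdot D_1 = |\mu_1| =: d.
$$
Write $\mathcal{P}_d$ for the space of pairs $\big((C, p_1, \ldots, p_n), g\big)$ consisting of a smooth $n$-marked genus-$0$ curve together with a degree-$d$ morphism $g : C \to \P^1$. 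Sending $f$ to $\big((C, p_i), \rho \circ \underline{f}\big)$ defines a morphism $\Psi : \mathcal{M}_\mathsf{\Gamma}(\mathcal{H}_a) \to \mathcal{P}_d$.

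Next I would set up the factorization. Composing $\overset{\circ}{\tau}$ with $\mathrm{id} \times \rho^n : \mathcal{M}_{0,n} \times \mathcal{H}_a^n \to \mathcal{M}_{0,n} \times (\P^1)^n$ gives the map $f \mapsto \big((C, p_i), (\rho f(p_i))_i\big)$, which factors as
$$
\mathcal{M}_\mathsf{\Gamma}(\mathcal{H}_a) \xrightarrow{\ \Psi\ } \mathcal{P}_d \xrightarrow{\ \mathrm{ev}\ } \mathcal{M}_{0,n} \times (\P^1)^n,
$$
where $\mathrm{ev}$ remembers the marked domain and evaluates $g$ at the $p_i$. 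Since $\rho$ is surjective, $\mathrm{id} \times \rho^n$ is dominant; hence if $\overset{\circ}{\tau}$ is dominant then so is the composite, and therefore $\mathrm{ev}$ is dominant. Finally I would compare dimensions: over a fixed marked domain the degree-$d$ morphisms $C \to \P^1$ form an open subset of $\P\big(H^0(\mathcal{O}_{\P^1}(d))^{\oplus 2}\big)$, of dimension $2d+1$, so $\dim \mathcal{P}_d \le (n-3) + (2d+1) = n + 2d - 2$, while $\dim\big(\mathcal{M}_{0,n} \times (\P^1)^n\big) = 2n-3$. Dominance of $\mathrm{ev}$ forces $n + 2d - 2 \ge 2n - 3$, i.e. $2d \ge n-1$, which is exactly $2|\mu_1| \ge n-1$.

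The computations are routine; the point requiring care is the reduction step, namely that dominance of $\overset{\circ}{\tau}$ genuinely descends to dominance of $\mathrm{ev}$ through the factorization (using only surjectivity of $\rho$ and that $\Psi$ lands in $\mathcal{P}_d$), together with the identification $\deg g = \beta \cdot D_1 = |\mu_1|$ rather than some other intersection number. The exact sequence \eqref{eqn: universal exact sequence} makes the first point transparent: pulling it back along $\underline{f}$ exhibits $g$ together with the line subbundle $\underline{f}^*\mathcal{O}_{\mathcal{H}_a}(-1) \hookrightarrow g^*(\mathcal{O}_{\P^1} \oplus \mathcal{O}_{\P^1}(a))$ as the data recovering $\underline{f}$, confirming that $\Psi$ is simply the projection to the $g$-component and that no information relevant to the count is lost.
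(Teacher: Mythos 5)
Your argument is correct and is essentially the paper's: both compose with the bundle projection $\rho$ to obtain degree-$|\mu_1|$ maps $\P^1\to\P^1$, observe that dominance of $\overset{\circ}{\tau}$ descends to dominance of the induced evaluation over $\mathcal{M}_{0,n}\times(\P^1)^n$, and conclude $2|\mu_1|+n-2\geq 2n-3$ by comparing dimensions. The only (harmless) difference is that you bound the dimension of the space of projected maps by the explicit linear-system parametrization $\P\bigl(H^0(\mathcal{O}_{\P^1}(d))^{\oplus 2}\bigr)$, whereas the paper routes through the log moduli space $\mathcal{M}_{\mathsf{\Gamma}'}(\P^1)$ and invokes Lemma \ref{lemma: irreducibility of moduli spaces} for its irreducibility and expected dimension.
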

\begin{proof}
    Composing maps in $\mathcal{M}_{\mathsf{\Gamma}}(\mathcal{H}_a)$ with $\rho$, we obtain maps $\P^1 \to \P^1$ of degree $|\mu_1|$. Let $\mathcal{M}_{\mathsf{\Gamma}'}(\P^1)$ be the moduli space of logarithmic stable maps $(\P^1,p_1,...,p_n,q_1,...,q_{2d}) \to \P^1$ with prescribed intersection multiplicities with $\partial \P^1$ all $1$. By Lemma \ref{lemma: irreducibility of moduli spaces}, this space is irreducible. It follows that the natural map $\mathcal{M}_{\mathsf{\Gamma}'}(\P^1) \to \mathcal{M}_{0,n} \times (\P^1)^n$ is also dominant and in particular that 
    $$
    2|\mu_1|+n-2=\mathrm{dim}(\mathcal{M}_{\mathsf{\Gamma}'}(\P^1)) \geq \mathrm{dim}(\mathcal{M}_{0,n} \times (\P^1)^n)=2n-3
    $$
    The conclusion follows.
\end{proof}
 
Given a map $f: \P^1 \to \mathcal{H}_a$, we can can pull back to $\P^1$ the exact sequence \eqref{eqn: universal exact sequence}, obtaining 
$$
\mathcal{O}_{\P^1} \oplus \mathcal{O}_{\P^1}(a|\mu_1|) \xrightarrow{v} \mathcal{O}_{\P^1}(|\mu_2| + a |   \mu_1|) \to 0
$$
such that 
$$
f(p)=\mathrm{ker}(v).
$$
We will think of $v$ as the data of two sections $s_1 \in H^0(\P^1,\mathcal{O}_{\P^1}(|\mu_2|))$ and $v_2 \in H^0(\P^1,\mathcal{O}_{\P^1}(|\mu_2|+a |\mu_1|)$.

The fact that $\overset{\circ}{\tau}$ is dominant amounts to say that for general points 
$$
\mathsf{L}_i \in \P( \mathcal{O}_{\P^1}(|\mu_2|) \oplus \mathcal{O}_{\P^1}(|\mu_2|+a |\mu_1|) ) \cong \mathcal{H}_a
\text{ and } 
p_1,\ldots,p_n \in \P^1
$$
there are sections $v_1 \in H^0(\P^1,\mathcal{O}_{\P^1}(|\mu_2|))$ and $v_2 \in H^0(\P^1,\mathcal{O}_{\P^1}(|\mu_2|+a |\mu_1|)$ such that 
$$
\langle v_1(p_i),v_2(p_i) \rangle =\mathsf{L}_i
$$
for each $i=1,\ldots,n$. Since the codimension of $$
\oplus_{i=1}^n \mathsf{L}_i \subseteq \oplus_{i=1}^n \mathcal{O}_{\P^1}(|\mu_2|)|_{p_i} \oplus \mathcal{O}_{\P^1}(|\mu_2|+a |\mu_1|)|_{p_i}
$$
is $n$,  the rank of the evaluation map 
$$
\Psi: H^0(\P^1,\mathcal{O}_{\P^1}(|\mu_2|)) \oplus H^0(\P^1,\mathcal{O}_{\P^1}(|\mu_2|+a |\mu_1|)) \to \bigoplus_{i=1}^n \mathcal{O}_{\P^1}(|\mu_2|)|_{p_i} \oplus \mathcal{O}_{\P^1}(|\mu_2|+a |\mu_1|)|_{p_i}
$$
must be at least $n+1$. Also, by Lemma \ref{lemma: aux geometry}, it is at most 
$$
2|\mu_2|+a |\mu_1|+2=2(n-1)-2|\mu_1| +2\leq n+1.
$$
Therefore, $2|\mu_1|=n-1$ and
$$
n-1=|\mu_2|+|\mu_4| \geq |\mu_4|=a |\mu_1|+|\mu_2| >n-1
$$
for $a\geq 2$. This yields a contradiction.

\section{Proof of Theorem \ref{thm: application}}\label{proof of applications}

It is well-known that for any $a \in \N$ the Hirzebruch surfaces $\mathcal{H}_a$ and $\mathcal{H}_{a+2}$ are deformation equivalent. We will use this fact to compute certain virtual Tevelev degrees of $\mathcal{H}_a$ by reducing to $\mathcal{H}_0 = \P^1 \times \P^1$ or $\mathcal{H}_1=\mathrm{Bl}_p \P^2$.  Theorem \ref{thm: application} will then follow by comparing the result with Theorem \ref{thm: main} above. 

Assume $ a\geq 2$.
\begin{lemma}\label{lemma: smooth family}
Let $0 < 2j \leq a$. Then there exists a smooth family $\pi: \mathcal{U} \to \mathbb{A}^1$ such that:
\begin{enumerate}
    \item $\pi^{-1}(0)$ is isomorphic to $ \mathcal{H}_a$;
    \item $\mathcal{U} \smallsetminus \pi^{-1}(0)$ is isomorphic to $\mathcal{H}_{(a -2j)} \times (\mathbb{A}^1 \smallsetminus \{0 \} )$ over $\mathbb{A}^1 \smallsetminus \{0 \}$;
    \item $\pi$ admits a section.
\end{enumerate}
\end{lemma}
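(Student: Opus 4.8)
The plan is to realize the whole family as a $\mathbb{P}^1$-bundle over the surface $\mathbb{P}^1 \times \mathbb{A}^1$, exploiting the description $\mathcal{H}_a = \mathbb{P}(\mathcal{O}_{\mathbb{P}^1} \oplus \mathcal{O}_{\mathbb{P}^1}(a))$ together with the projective invariance $\mathbb{P}(\mathcal{E}) \cong \mathbb{P}(\mathcal{E}\otimes L)$, which gives $\mathbb{P}(\mathcal{O}(p)\oplus\mathcal{O}(q)) \cong \mathcal{H}_{|p-q|}$. Concretely, I would build a rank-$2$ bundle $\mathcal{E}$ on $\mathbb{P}^1 \times \mathbb{A}^1$, flat over $\mathbb{A}^1$, whose restriction to the central fiber is $\mathcal{O}\oplus\mathcal{O}(a)$ and whose restriction to every $t \neq 0$ is $\mathcal{O}(j)\oplus\mathcal{O}(a-j)$; then set $\mathcal{U} = \mathbb{P}(\mathcal{E})$ with $\pi$ the composite $\mathbb{P}(\mathcal{E})\to \mathbb{P}^1\times\mathbb{A}^1 \to \mathbb{A}^1$. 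Smoothness of $\mathcal{U}$ and of $\pi$ is then automatic, since $\mathbb{P}(\mathcal{E})\to\mathbb{P}^1\times\mathbb{A}^1$ and $\mathbb{P}^1\times\mathbb{A}^1\to\mathbb{A}^1$ are both smooth, and the fiber over $0$ is $\mathbb{P}(\mathcal{O}\oplus\mathcal{O}(a)) = \mathcal{H}_a$, giving part (1).

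To construct $\mathcal{E}$, fix a nonsplit extension on $\mathbb{P}^1$
$$
0 \to \mathcal{O} \to \mathcal{F} \to \mathcal{O}(a) \to 0
$$
with $\mathcal{F}\cong \mathcal{O}(j)\oplus\mathcal{O}(a-j)$, and let $e\in \operatorname{Ext}^1(\mathcal{O}(a),\mathcal{O}) = H^1(\mathbb{P}^1,\mathcal{O}(-a))$ be its class. Such an $\mathcal{F}$ exists precisely because $0 < j \le a/2 \le a-1$: a generic inclusion $\mathcal{O}\hookrightarrow \mathcal{O}(j)\oplus\mathcal{O}(a-j)$ given by two sections with disjoint zero loci has locally free quotient of degree $a$, hence quotient $\mathcal{O}(a)$, and the resulting class is nonzero since $\mathcal{F}\not\cong\mathcal{O}\oplus\mathcal{O}(a)$ for $j\ge 1$. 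This sidesteps computing $e$ explicitly. Using $H^1(\mathbb{P}^1\times\mathbb{A}^1, p^*\mathcal{O}(-a)) = H^1(\mathbb{P}^1,\mathcal{O}(-a))\otimes \mathbb{C}[t]$ (Künneth, with $p$ the projection to $\mathbb{P}^1$), I take the class $t\cdot e = e\otimes t$ and let $\mathcal{E}$ be the corresponding extension of $p^*\mathcal{O}(a)$ by $p^*\mathcal{O}$. Since forming extensions commutes with the flat restriction to a fiber $\mathbb{P}^1\times\{c\}$ (as $p^*\mathcal{O}(a)$ is $\mathbb{A}^1$-flat), the restricted class is $c\cdot e$: it is $0$ for $c=0$, so $\mathcal{E}_0\cong \mathcal{O}\oplus\mathcal{O}(a)$, and a nonzero multiple of $e$ for $c\neq 0$, so $\mathcal{E}_c\cong\mathcal{F}$ since proportional extension classes yield isomorphic middle terms. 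This produces exactly the two claimed fiber types.

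For part (2), I would observe that over the punctured line $\mathbb{A}^1\smallsetminus\{0\} = \operatorname{Spec}\mathbb{C}[t,t^{-1}]$ the scalar $t$ is a unit, so multiplication by $t$ is an automorphism of $p^*\mathcal{O}(a)$ carrying the extension of class $e$ to the one of class $te$; pulling back along it identifies $\mathcal{E}|_{t\neq 0}$ with $q^*\mathcal{F}$, where $q$ is the projection $\mathbb{P}^1\times(\mathbb{A}^1\smallsetminus\{0\})\to\mathbb{P}^1$. Projectivizing gives $\mathbb{P}(\mathcal{E})|_{t\neq 0}\cong \mathbb{P}(\mathcal{F})\times(\mathbb{A}^1\smallsetminus\{0\}) = \mathcal{H}_{a-2j}\times(\mathbb{A}^1\smallsetminus\{0\})$ over $\mathbb{A}^1\smallsetminus\{0\}$, which is (2). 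Finally, for (3), the sub-bundle inclusion $p^*\mathcal{O}\hookrightarrow\mathcal{E}$ (with locally free quotient) defines a section of $\mathbb{P}(\mathcal{E})\to\mathbb{P}^1\times\mathbb{A}^1$; composing with the section $t\mapsto(p_0,t)$ of $\mathbb{P}^1\times\mathbb{A}^1\to\mathbb{A}^1$ for a fixed $p_0\in\mathbb{P}^1$ produces a section of $\pi$.

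The step requiring the most care is pinning down the generic splitting type, i.e.\ ensuring that the chosen class $e$ really produces $\mathcal{O}(j)\oplus\mathcal{O}(a-j)$ rather than some other type of the same total degree. The argument above circumvents a transition-matrix computation by building $e$ directly from an inclusion with the prescribed middle term, so the only remaining points to verify carefully are the Künneth identification of the relevant cohomology group and the compatibility of flat restriction to fibers with the extension class.
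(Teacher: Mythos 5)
Your construction is correct, but it takes a genuinely different route from the paper. You build the total space abstractly as $\mathbb{P}(\mathcal{E})$ for a rank-$2$ extension of $p^*\mathcal{O}_{\mathbb{P}^1}(a)$ by $p^*\mathcal{O}_{\mathbb{P}^1}$ on $\mathbb{P}^1\times\mathbb{A}^1$ whose class is $t$ times a fixed nonsplit class $e$ with middle term $\mathcal{O}(j)\oplus\mathcal{O}(a-j)$; the jumping of the splitting type at $t=0$ then does all the work, and smoothness, the trivialization over $t\neq 0$ (via the automorphism of $p^*\mathcal{O}(a)$ given by the unit $t$), and the section (from the sub-line-bundle $p^*\mathcal{O}\hookrightarrow\mathcal{E}$) all come for free from the bundle description. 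The paper instead writes the family completely explicitly as the hypersurface $x_0^ay_1-x_1^ay_0+sx_0^jx_1^{j+1}y_2=0$ in $\mathbb{P}^1\times\mathbb{P}^2\times\mathbb{A}^1$ and exhibits the isomorphisms of the fibers with $\mathcal{H}_a$ and $\mathcal{H}_{a-2j}$ in Cox coordinates, together with an explicit section $s\mapsto([0,1],[0,s,1])$. Your approach is more conceptual and scales trivially to any $0<2j\le a$ without guessing an equation, at the cost of the small verifications you correctly flag (the K\"unneth identification of $\operatorname{Ext}^1$ over $\mathbb{A}^1$ and the compatibility of the extension class with restriction to fibers, both standard); the paper's approach is elementary and self-contained but requires checking the explicit isomorphisms by hand. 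Your trick of producing the class $e$ from a nowhere-vanishing pair of sections of $\mathcal{O}(j)\oplus\mathcal{O}(a-j)$, rather than computing a transition matrix, cleanly pins down the generic splitting type, which is the one step where an abstract deformation argument could otherwise go wrong.
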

\begin{proof}
This is probably a well-known construction. Let
$$
\pi: \mathcal{U}= \{ ([x_0,x_1],[y_0,y_1,y_2],s) \ | \ x_0^ay_1-x_1^ay_0+sx_0^jx_1^{j+1}y_2=0\} \to \mathbb{A}^1
$$
where $\mathcal{U} \subseteq \P^1 \times \P^2 \times \mathbb{A}^1$ and $\pi$ is the projection onto $\mathbb{A}^1$. We claim that $\pi$ is as stated in the lemma. In order to provide the required isomorphisms we will use the following description \cite{Cox, Manetti}:
$$
\mathcal{H}_a= (\C^2 \smallsetminus \{ 0 \} ) \times (\C^2 \smallsetminus \{ 0 \}) / \sim 
$$
where the equivalence relation $\sim$ is given by the $(\C^*)^2$-action is given by
$$
(\lambda,\eta) . (l_0,l_1,t_0,t_1)=(\lambda l_0, \lambda l_1, \lambda^a \eta t_0, \eta t_1).
$$
Then
\begin{align*} 
    \mathcal{H}_a &\to \pi^{-1}(0) \hspace{2.5cm} \\ 
    [l_0,l_1,t_0,t_1]& \mapsto ([l_0,l_1],[t_1l_0^a,t_1l_1^a,t_0])
\end{align*}
and 
\begin{align*} 
     \mathcal{H}_{(a -2m)} \times (\mathbb{A}^1 \smallsetminus \{0 \} ) &\to \mathcal{U} \smallsetminus \pi^{-1}(0) \\
     ([l_0,l_1,t_0,t_1],s) &\mapsto ([l_0,l_1],[sl_0^jt_0,sl_1^{j+1}t_1,l_1^{a-j-1}t_0-l_0^{a-j}t_1],s)
\end{align*}
are isomorphisms and 
\begin{align*}
s: \mathbb{A}^1 &\to \mathcal{U} \\
s \mapsto & ([0,1],[0,s,1])
\end{align*}
is a section.
\end{proof}

Write $a=2j$ or $a=2j+1$ for $j \in \Z_{\geq 1}$ depending on the parity of $a$ and let
$$
\pi: \mathcal{U} \to \mathbb{A}^1
$$
be a smooth family as in Lemma \ref{lemma: smooth family}. 

\begin{remark}
    Given a line bundle $L$ on $\mathcal{U} \smallsetminus \pi^{-1}(0)$, we can always extend $L$ to a line bundle on the all $\mathcal{U}$ (which is possible being $\mathcal{U}$ smooth). Even if there are many extensions of $L$, for each $s \in \mathbb{A}^1$ the restriction $L_s$ of $L$ to $\pi^{-1}(s)$ is independent of the extension.
\end{remark}

We treat the two case in Theorem \ref{thm: application} separately.

\subsubsection{Case \texorpdfstring{$a=2j$}{}}
   
Let $L$ be the pullback of $\mathcal{O}(1) \boxtimes \mathcal{O}(1)$ from $\mathcal{H}_0 \cong \P^1 \times \P^1$ on $\mathcal{U}\smallsetminus \pi^{-1}(0) \cong \mathcal{H}_0 \times (\mathbb{A}^1 \smallsetminus \{0 \})$.

\begin{lemma}\label{lemma: deformation class I}
    We have 
    $$
    c_1(L_0)=(1+j)D_1+D_2 \in A_*(\mathcal{H}_a)
    $$  
    where $D_1$ and $D_2$ are the toric divisors of $\mathcal{H}_a$ as in Notation \ref{notation: rays}
\end{lemma}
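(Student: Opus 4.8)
The plan is to exploit the fact that $\mathcal{U}$ sits inside $\mathbb{P}^1 \times \mathbb{P}^2 \times \mathbb{A}^1$, so that the two tautological bundles $\mathcal{A} := \mathrm{pr}_{\mathbb{P}^1}^*\mathcal{O}_{\mathbb{P}^1}(1)$ and $\mathcal{B} := \mathrm{pr}_{\mathbb{P}^2}^*\mathcal{O}_{\mathbb{P}^2}(1)$, restricted to $\mathcal{U}$, are line bundles defined on \emph{all} of $\mathcal{U}$. By the Remark preceding this lemma, the restriction $L_0$ to the central fiber is independent of the chosen extension of $L$, so it is enough to produce one global extension assembled from $\mathcal{A}$ and $\mathcal{B}$ and restrict it. Concretely, I will first express $L$ on $\mathcal{U}\smallsetminus\pi^{-1}(0)$ as a monomial $\mathcal{A}^{\otimes p}\otimes\mathcal{B}^{\otimes q}$, and then read off the class of $(\mathcal{A}^{\otimes p}\otimes\mathcal{B}^{\otimes q})|_{\pi^{-1}(0)}$ through the explicit isomorphism $\phi_0:\mathcal{H}_a\xrightarrow{\sim}\pi^{-1}(0)$ of Lemma \ref{lemma: smooth family}.

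For the first step, recall $\mathcal{U}\smallsetminus\pi^{-1}(0)\cong \mathcal{H}_0\times(\mathbb{A}^1\smallsetminus\{0\})$; since $\mathrm{Pic}(\mathbb{A}^1\smallsetminus\{0\})=0$ its Picard group is $\mathrm{Pic}(\mathcal{H}_0)=\mathbb{Z}\langle\mathcal{O}(1,0),\mathcal{O}(0,1)\rangle$, and computing a class there reduces to reading the Cox bidegree of the coordinates defining the isomorphism $\psi$ of Lemma \ref{lemma: smooth family}. The first $\mathbb{P}^1$-coordinate $[l_0:l_1]$ has bidegree $(1,0)$, so $\psi^*\mathcal{A}=\mathcal{O}(1,0)$; the three $\mathbb{P}^2$-coordinates are homogeneous of common bidegree $(j,1)$ on $\mathcal{H}_0$, so $\psi^*\mathcal{B}=\mathcal{O}(j,1)$. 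Since $L=\mathcal{O}(1)\boxtimes\mathcal{O}(1)=\mathcal{O}(1,1)$, solving $(p+qj,\,q)=(1,1)$ gives $q=1$ and $p=1-j$, i.e. $L\cong\mathcal{B}\otimes\mathcal{A}^{\otimes(1-j)}$ on the open part (the negative power is harmless, as $\mathcal{A}^{\otimes(1-j)}$ is still globally defined on $\mathcal{U}$).

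For the second step, the isomorphism $\phi_0:[l_0,l_1,t_0,t_1]\mapsto([l_0,l_1],[t_1l_0^a,t_1l_1^a,t_0])$ allows the same Cox-degree reading on $\mathcal{H}_a$, where $l_0,l_1,t_0,t_1$ carry degrees $(1,0),(1,0),(a,1),(0,1)$. The $\mathbb{P}^1$-coordinate gives $\phi_0^*\mathcal{A}=\mathcal{O}_{\mathcal{H}_a}(1,0)$, of class the fiber $D_1$; the three $\mathbb{P}^2$-coordinates $t_1l_0^a,\ t_1l_1^a,\ t_0$ are all of bidegree $(a,1)$, so $\phi_0^*\mathcal{B}=\mathcal{O}_{\mathcal{H}_a}(a,1)$, of class $aD_1+D_2=D_4$ under the toric dictionary (with $D_1$ the fiber and $D_2=\{t_1=0\}$ the section of self-intersection $-a$). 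Combining,
$$
c_1(L_0)=c_1\!\left(\phi_0^*(\mathcal{B}\otimes\mathcal{A}^{\otimes(1-j)})\right)=D_4+(1-j)D_1=(a+1-j)D_1+D_2=(1+j)D_1+D_2,
$$
using $a=2j$ in the last equality, which is the claim.

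The only genuine content lies in the two Cox-bidegree computations, and the main bookkeeping obstacle is verifying that the three $\mathbb{P}^2$-coordinates of $\psi$ are indeed homogeneous of the \emph{same} bidegree $(j,1)$ on $\mathcal{H}_0$ in the parity $a=2j$, which requires tracking the precise exponents appearing in the isomorphism of Lemma \ref{lemma: smooth family}, together with matching Cox degrees to the basis $\{D_1,D_2\}$. The independence-of-extension Remark is what legitimizes computing $L_0$ by naively restricting a globally defined bundle, so no separate comparison of different extensions is needed.
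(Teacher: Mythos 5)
Your strategy is genuinely different from the paper's. The paper never returns to the explicit equations of $\mathcal{U}$: it writes $c_1(L_0)=xD_1+yD_2$ and pins down $(x,y)$ using the two deformation-invariant numbers $c_1(L_s)^2=2$ and $c_1(L_s)\cdot c_1(\mathcal{T}_{\pi^{-1}(s)})=4$, computed on the generic fiber $\mathbb{P}^1\times\mathbb{P}^1$; the resulting system $2xy-ay^2=2$, $2x+(2+a)y-2ay=4$ has the unique integral solution $(1+j,1)$. Your route --- extending $L$ explicitly as a monomial $\mathcal{A}^{\otimes(1-j)}\otimes\mathcal{B}$ in the tautological bundles pulled back from the ambient $\mathbb{P}^1\times\mathbb{P}^2$ and restricting via $\phi_0$ --- is more concrete and, when it works, produces an actual extension rather than only the limit class. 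Your step 2 is correct: the three $\mathbb{P}^2$-coordinates $t_1l_0^a,\ t_1l_1^a,\ t_0$ of $\phi_0$ all have Cox degree $(a,1)$, so $\phi_0^*\mathcal{B}=D_4=aD_1+D_2$ and $(1-j)D_1+D_4=(1+j)D_1+D_2$ for $a=2j$, as you say.

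The difficulty is precisely the step you defer. With the formulas as printed in Lemma \ref{lemma: smooth family}, the three $\mathbb{P}^2$-coordinates of $\psi$ are \emph{not} homogeneous of a common bidegree when $a=2j$: on $\mathcal{H}_0$ the variables $l_0,l_1$ have degree $(1,0)$ and $t_0,t_1$ degree $(0,1)$, so $sl_0^jt_0$ has degree $(j,1)$, $sl_1^{j+1}t_1$ has degree $(j+1,1)$, and $l_1^{a-j-1}t_0-l_0^{a-j}t_1$ is not homogeneous at all; likewise the defining polynomial $x_0^ay_1-x_1^ay_0+sx_0^jx_1^{j+1}y_2$ is bihomogeneous only for $a=2j+1$. (This appears to be a typo in the paper --- for $a=2j$ one presumably wants $sx_0^jx_1^jy_2$ with $\psi$ adjusted --- but as it stands your pivotal identity $\psi^*\mathcal{B}=\mathcal{O}(j,1)$ cannot be read off from the printed $\psi$, so the verification you flag as "the main bookkeeping obstacle" actually fails against the stated formulas.) You can close the gap without repairing the formulas by determining $\mathcal{B}$ on the generic fiber numerically: $(\mathcal{B}|_{\pi^{-1}(0)})^2=D_4^2=a$, hence by constancy of intersection numbers in the smooth family $\mathcal{B}|_{\pi^{-1}(1)}=\mathcal{O}(j,1)$ --- but at that point you are invoking exactly the deformation-invariance argument of the paper's proof, merely applied to $\mathcal{B}$ instead of to $L$ itself.
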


\begin{proof}
    Clearly 
    \begin{equation}\label{eq1 for L0}
    c_1(L_0)^2=c_1(L_1)^2=2.
    \end{equation}
    Also since the normal bundle to each fiber of $\pi$ is trivial, we have 
    \begin{equation}\label{eq2 for L_0}
    c_1(L_0).c_1(\mathcal{T}_{\pi^{-1}(0)})= c_1(L_1).c_1(\mathcal{T}_{\pi^{-1}(1)})=4
    \end{equation}
    These two equations completely determine $c_1(L_0)$. Namely, $D_1$ and $D_2$ form a basis of $A_1(\mathcal{H}_a)$ and writing $c_1(L_1)= x D_1 +y D_2$, Equations \ref{eq1 for L0} and \ref{eq2 for L_0} yields
    $$
    \begin{cases}
        2=2xy-y^2a,\\
        4= 2x+(2+a)y-2ya  \, 
    \end{cases} 
    $$
    whose integral solution is $(x,y)=(1+j,1)$.
\end{proof}
        
Fix $d>0$ such that $2d=n-1$ and let $\beta =d[(1+j)D_1+D_2]$. Since Gromov-Witten invariants are deformation invariant, we get
$$
1=\mathsf{vTev}^{\P^1 \times \P^1}_{0,n,(d,d)}=\mathsf{vTev}^{\mathcal{H}_a}_{0,n,\beta}
$$
The first equality follows from \cite[Example 2.2 and Proposition 2.3]{BP}. Note that the existence of a section of $\pi$ guarantee that on each fiber the point class can be realized as the restriction of a class from $\mathcal{U}$.

To conclude, if $\mathsf{\Gamma}$ and $\alpha$ are as in Theorem \ref{thm: application}, we have
$$
\mathsf{vTev}_\mathsf{\Gamma}^{\mathcal{H}_a}=0 \neq 1 = \mathsf{vTev}^{\mathcal{H}_a}_{0,n,\beta}
$$
and thus $\alpha_*[\oM_\mathsf{\Gamma}(X)]^{\mathrm{vir}} \neq [\oM_{0,n}(X,\beta)]^{\mathrm{vir}}$ as desired.

\subsubsection{Case \texorpdfstring{$a=2j+1$}{}}

This case is very similar, but uses \cite[Theorem 14]{CL} instead of \cite{BP}.

Let $L_\H$ (resp. $L_\E$) be the pullback of $\mathcal{O}(\H)$ (resp $\mathcal{O}(\E)$) from $\mathcal{H}_1 \cong \mathrm{Bl}_p \P^2$ on $\mathcal{U}\smallsetminus \pi^{-1}(0) \cong \mathcal{H}_1 \times (\mathbb{A}^1 \smallsetminus \{0 \})$. Here $\H$ (resp. $\E$) is the hyperplane class (resp. the class of the exceptional divisor) on $\mathrm{Bl}_p \P^2$.

\begin{lemma}
    We have 
    $$
    c_1((L_\H)_0)=(1+j)D_1+D_2
    $$
    and 
    $$
    c_1((L_\E)_0)=jD_1+D_2
    $$
\end{lemma}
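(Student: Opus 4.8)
The plan is to follow the argument of Lemma \ref{lemma: deformation class I} essentially verbatim, replacing the generic fiber $\mathcal{H}_0 = \P^1\times\P^1$ by $\mathcal{H}_1 = \mathrm{Bl}_p\P^2$ and using two deformation-invariant intersection numbers to pin down each of the two classes. First I would record the intersection theory I need. On $\mathcal{H}_a$, the basis $D_1, D_2$ of $A_1(\mathcal{H}_a)$ satisfies $D_1^2 = 0$, $D_1\cdot D_2 = 1$, $D_2^2 = -a$ (using $D_3 \sim D_1$ and $D_4 \sim aD_1 + D_2$), and the anticanonical class is $c_1(\mathcal{T}_{\mathcal{H}_a}) = \sum_{i=1}^4 D_i = (a+2)D_1 + 2D_2$. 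On $\mathcal{H}_1$ one has $\H^2 = 1$, $\H\cdot\E = 0$, $\E^2 = -1$ and $c_1(\mathcal{T}_{\mathcal{H}_1}) = 3\H - \E$.

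Next I would note, exactly as in the proof of Lemma \ref{lemma: deformation class I}, that for either $L \in \{L_\H, L_\E\}$ the numbers $c_1(L_s)^2$ and $c_1(L_s)\cdot c_1(\mathcal{T}_{\pi^{-1}(s)})$ are independent of $s$: the former because it is the degree of $c_1(L)^2$ on the fibers of the flat family $\pi$, and the latter because the triviality of the normal bundle of each fiber gives $c_1(\mathcal{T}_{\pi^{-1}(s)}) = c_1(\mathcal{T}_{\mathcal{U}})|_{\pi^{-1}(s)}$, so that the pairing is the fiberwise restriction of the global number $c_1(L)\cdot c_1(\mathcal{T}_{\mathcal{U}})$.

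Writing $c_1((L_\H)_0) = xD_1 + yD_2$ and equating with the values computed on $\mathcal{H}_1$ then gives $2xy - ay^2 = \H^2 = 1$ and $2x + (2-a)y = \H\cdot(3\H - \E) = 3$; with $a = 2j+1$, eliminating $x$ reduces this to $(2y-1)(y-1) = 0$. Likewise, writing $c_1((L_\E)_0) = xD_1 + yD_2$ and using $\E^2 = -1$ and $\E\cdot(3\H - \E) = 1$ gives $2xy - ay^2 = -1$ and $2x + (2-a)y = 1$, which reduces to $(2y+1)(y-1) = 0$.

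The one point where the odd case genuinely differs from Lemma \ref{lemma: deformation class I} — there the analogous quadratic had the double root $y = 1$, so real solubility alone sufficed — is that here each quadratic has two distinct real roots, one a half-integer and one equal to $1$. Since $c_1((L_\H)_0)$ and $c_1((L_\E)_0)$ are first Chern classes of genuine line bundles on $\mathcal{H}_a$, they lie in the lattice $\Z D_1 \oplus \Z D_2$, so the half-integer roots are discarded and $y = 1$ in both cases. The linear equations then force $x = 1+j$ for $L_\H$ and $x = j$ for $L_\E$, as claimed. This integrality step is the only real obstacle; everything else is the routine computation indicated above.
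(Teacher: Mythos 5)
Your proposal is correct and follows essentially the same route as the paper, which simply says to proceed as in Lemma \ref{lemma: deformation class I}: compute the deformation-invariant numbers $c_1(L_s)^2$ and $c_1(L_s)\cdot c_1(\mathcal{T}_{\pi^{-1}(s)})$ on the generic fiber $\mathcal{H}_1$ and solve the resulting system in the basis $D_1, D_2$ of $A_1(\mathcal{H}_a)$. Your explicit observation that for odd $a$ the quadratic has two distinct roots and that integrality of $c_1$ in $\Z D_1\oplus \Z D_2$ is what discards the half-integer root is a genuine refinement of the paper's terse ``proceed as before,'' and it is exactly the right way to close that step.
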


\begin{proof} 
    Proceed as in Lemma \ref{lemma: deformation class I}
\end{proof}
        
Fix $d$ and $k$ integers such that $0 \leq k \leq d$, $0 \leq k \leq n-1-d$ and $3d-k=2(n-1)$. Call $\beta =[j(d-k)+d] D_1 +(d-k) D_2$. Then
$$
\binom{n-1-d}{k}=\mathsf{vTev}^{\mathrm{Bl}_p \P^2}_{0,n,d \H-k \E}=\mathsf{vTev}^{\mathcal{H}_a}_{0,n,\beta}
$$
where the first equality follows from \cite[Theorem 14]{CL}. If $\mathsf{\Gamma}$ and $\alpha$ are as in Theorem \ref{thm: application}, we have
$$
\mathsf{vTev}_\mathsf{\Gamma}^{\mathcal{H}_a}=0 \neq \binom{n-1-d}{k} = \mathsf{vTev}^{\mathcal{H}_a}_{0,n,\beta}
$$
and thus $\alpha_*[\oM_\mathsf{\Gamma}(X)]^{\mathrm{vir}} \neq [\oM_{0,n}(X,\beta)]^{\mathrm{vir}}$.

This concludes the proof of Theorem \ref{thm: application}.


\newcommand{\etalchar}[1]{$^{#1}$}

\end{document}